\documentclass[12pt]{article}
\usepackage[margin=1in]{geometry}
\usepackage{url}
\usepackage{graphicx}
\usepackage{subcaption}

\usepackage[T1]{fontenc}

\usepackage{amsmath}
\usepackage{amssymb}
\usepackage{amsthm}
\usepackage{mathrsfs}

\theoremstyle{plain}
\newtheorem{theorem}{Theorem}

\newtheorem{lemma}[theorem]{Lemma}

\theoremstyle{definition}

\newtheorem{example}[theorem]{Example}
\newtheorem{conjecture}[theorem]{Conjecture}
\newtheorem{problem}{Problem}

\theoremstyle{remark}
\newtheorem{remark}[theorem]{Remark}

\newcommand{\VTM}{\mbox{\hspace{.01in}{\tt vtm}}}
\newcommand{\Thue}{\mbox{\hspace{.01in}{\tt vtm}}}
\newcommand{\EXTRA}[1]{}

\newcommand{\NN}{\omega}

\DeclareMathOperator{\lcp}{lcp}

\title{Some further results on squarefree arithmetic
  progressions in infinite words}
\author{
James Currie and Narad Rampersad\thanks{The authors are supported by
  NSERC Discovery Grants 03901-2017 (Currie) and 418646-2012 (Rampersad).}\\
Department of Mathematics and Statistics\\
University of Winnipeg\\
\url{{j.currie, n.rampersad}@uwinnipeg.ca}\\
\and
Tero Harju\\
Department of Mathematics and Statistics\\
         University of~Turku, Finland\\
         \texttt{harju@utu.fi}
\and
Pascal Ochem\thanks{The author is supported by ANR project CoCoGro
  (ANR-16-CE40-0005).}\\
LIRMM - CNRS\\
University of Montpellier, France\\
\url{ochem@lirmm.fr}\\
}
\date{\today}

\begin{document}
\maketitle

\begin{abstract}
  In a recent paper, one of us posed three open problems concerning
  squarefree arithmetic progressions in infinite words.  In this note
  we solve these problems and prove some additional results.
\end{abstract}

\section{Introduction}
The study of infinite words avoiding squares is a classical problem in
combinatorics on words.  A \emph{square} is a word of the form $xx$,
such as \texttt{tartar}.  One of the most well-studied squarefree
words~\cite{Thu12, Hal64} is the word $\Thue = 012021012102012\cdots$
obtained by iterating the map $0 \mapsto 012$, $1 \mapsto 02$, $2
\mapsto 1$.

A number of authors, such as Carpi~\cite{Car88}, Currie and Simpson
\cite{CS02}, and Kao et al.~\cite{KRSS08}, have studied squarefree
arithmetic progressions in infinite words.  Let $p$ be a positive
integer.  If $w = w_0w_1w_2\cdots$ is an infinite word where each
$w_i$ is a single letter, then $[w]_p$ denotes the infinite word
$w_0w_{p}w_{2p}\cdots$.  Harju~\cite{Har18} studied the following
question and showed that it has a positive solution for all $p\geq3$:
\begin{quote}
  Given $p$, does there exist an infinite squarefree word
  $w$ over a ternary alphabet such that $[w]_p$
  is squarefree?
\end{quote}
At the end of his paper, Harju posed three open problems:
\begin{problem}\label{prob1}
Does there exist a squarefree word $w$ over
    a ternary alphabet such that for every $p\geq 3$, the subsequence
    $[w]_p$ contains a square?
\end{problem}

\begin{problem}\label{prob2}
Do there exist pairs $(p,q)$ of relatively prime integers
    such that there exists a squarefree word $w$
    over a ternary alphabet for which both $[w]_p$
    and $[w]_q$ are squarefree?
\end{problem}

\begin{problem}\label{prob3}
It is true that for all squarefree words $v$ over a ternary alphabet,
there exists a squarefree word $w$ and an integer $p\geq 3$ such that $[w]_p = v$?
\end{problem}

In this paper we answer each of these questions.  Regarding
Problem~\ref{prob3}, we resolve this problem by instead studying the
following much stronger version of this problem.

\begin{problem}\label{prob4}
Let $p\geq 2$ be an integer and let $v$ be \emph{any} infinite ternary word.
Does there exist an infinite ternary squarefree word $w$ such that $[w]_p = v$?
\end{problem}

For $6\leq p\leq 29$ and $v = 00000\cdots$, we give a positive
answer to Problem~\ref{prob4}.  For $p\geq 30$, we give a positive
answer for all infinite words $v$.  In fact, we prove an even stronger
result by showing that, given any sequence $(p_i)_{i\geq 0}$ of
positions such that $p_{i+1}-p_i\geq 30$, it is possible to construct
$w$ such that $v$ appears as the subsequence of $w$ indexed by $(p_i)_{i\geq 0}$.

\section{Preliminaries}
Let $A$ be a finite alphabet of letters.
For a word $w$ over $A$ (i.e., $w \in A^*$), let $|w|$ denote its length,
i.e., the number of occurrences of letters in $w$.
A word $u$ is a \emph{factor} of $w$, if $w=xuy$ where $x$ and/or $y$ may be empty.
If $x$ ($y$, resp.) is empty then $u$ is a \emph{prefix} (a \emph{suffix}, resp.)  of $w$.

A finite or infinite word $w$ over $A$ is \emph{squarefree}
if it does not have any factors of the form $u^2=uu$ for nonempty words $u \in A^*$.
A morphism $h\colon A^* \to A^*$ is said to be \emph{squarefree},
if it preserves squarefreeness of words, i.e., if $h(w)$ is squarefree for all squarefree words $w$.
A morphism $h\colon A^* \to A^*$ is \emph{uniform} if the images $h(a)$ have the same length:
$|h(a)|=n$ for all $a \in A$ and for some positive $n$ called the \emph{length} of~$h$.

An infinite word $w$ is a \emph{fixed point} of a morphism $h$ if $h(w)=w$. This happens
if $w$ begins with the letter $a$, and $w$ is obtained by iterating $h$ on
the first letter~$a$ of $w$: $h(a)=au$ and
$w=auh(u)h^2(u)\cdots$. In this case we denote the fixed point $w$ by~$h^\omega(a)$.

For the next algorithmically helpful result we refer to Crochemore~\cite{Crochemore}.

\begin{theorem}\label{Crochemore}
  Let $h\colon A^* \to A^*$ be a morphism.
  \begin{enumerate}
    \item If $|A|=3$ then $h$ is squarefree if and only if it preserves
      squarefreeness of all squarefree words of length five.
    \item If $h$ is uniform then $h$ is squarefree if and only if it
      preserves squarefreeness of all squarefree words of length three.
  \end{enumerate}
\end{theorem}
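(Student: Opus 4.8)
The forward implication is trivial in both parts: a squarefree morphism preserves squarefreeness of every squarefree word, in particular of those of length $5$ (resp.\ $3$). So I would argue the converse by contraposition, handling the two parts in parallel and branching only when necessary. Assume $h$ preserves squarefreeness of all squarefree words of length $5$ (resp., in the uniform case, of length $3$) but is \emph{not} squarefree. Choose a squarefree word $w=a_1a_2\cdots a_n$ of minimum length $n$ for which $h(w)$ fails to be squarefree, and fix an occurrence of a square $UU$ inside $V:=h(w)$. Since the hypothesis for length $5$ (resp.\ $3$) also covers all shorter squarefree words — each being a prefix of a squarefree word of that length over the relevant alphabet — it in particular forces $h$ to be nonerasing, injective on letters, and each $h(a)$ to be squarefree. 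It therefore suffices to prove $n\le 5$ (resp.\ $n\le 3$), since this contradicts the hypothesis.

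The first reduction localizes the square. By minimality of $n$, the chosen occurrence of $UU$ can lie neither inside $h(a_1\cdots a_{n-1})$ nor inside $h(a_2\cdots a_n)$, as each is the image of a strictly shorter squarefree word. Hence $UU$ begins strictly inside the first block $h(a_1)$ and ends strictly inside the last block $h(a_n)$; in particular $n\ge 2$, every block $h(a_i)$ meets $UU$, and the interior blocks $h(a_2),\dots,h(a_{n-1})$ lie entirely within $UU$. Writing $d_i=|h(a_1\cdots a_i)|$ for the block boundaries and letting the centre $c$ of the square (the position separating the two copies of $U$) fall in block number $k$, so $d_{k-1}\le c\le d_k$, we get that the first copy of $U$ is a factor of $h(a_1\cdots a_k)$ containing the whole blocks $h(a_2),\dots,h(a_{k-1})$, while the second copy of $U$ is a factor of $h(a_k\cdots a_n)$ containing the whole blocks $h(a_{k+1}),\dots,h(a_{n-1})$.

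The heart of the argument is to compare these two factorizations of the single word $U$. Equating the two occurrences of $U$, the block boundaries read off from the first occurrence and those read off from the second must agree ``up to a bounded shift''. In the uniform case all blocks have the same length $\ell$, so the shift is an integer multiple $t\ell$: if $t\ne 0$ one deduces $a_{i+t}=a_i$ along a long range of aligned interior blocks (using injectivity of $h$), and if $t=0$ the interior blocks coincide letter for letter; either way, propagating the periodicity one block further using the two partial blocks at the ends of $UU$ produces a square already inside $h(a_1\cdots a_{n-1})$ or $h(a_2\cdots a_n)$ once $n$ is large, contradicting minimality, and bookkeeping the few blocks involved yields $n\le 3$. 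In the general ternary case the lengths vary, so I would first prove a \emph{synchronization lemma} — that the short-word hypothesis forbids $h(a)$ from occurring in $h(bc)$ straddling the internal boundary with both sides nonempty, and more generally bounds how images of distinct letters can overlap — and then run the same alignment argument with a case split on the position of $c$ within block $k$ and on the lengths of the extremal blocks $h(a_1)$ and $h(a_n)$, again reaching either a shorter counterexample or a short squarefree factor of $w$ whose image contains a square; the bookkeeping now gives $n\le 5$.

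The main obstacle I anticipate is exactly this last bookkeeping: getting from ``the two decompositions of $U$ are almost aligned'' to the \emph{sharp} constants $5$ and $3$, rather than to some larger bound, requires tracking precisely the handful of blocks near the centre $c$ and near the two ends of $UU$, and in the non-uniform case leaning on the synchronization lemma to rule out fractional overlaps of images. The uniform part is appreciably easier because the block lattice is rigid, which both pins the shift to an exact multiple of $\ell$ and dispenses with the separate synchronization lemma.
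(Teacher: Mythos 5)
The paper never proves this statement: it is imported verbatim from Crochemore's 1982 paper and used as a black box, so there is no internal proof to measure you against, and your sketch has to stand on its own. Its skeleton (minimal counterexample $w=a_1\cdots a_n$, localization of the square $UU$ so that it meets every block, comparison of the two block decompositions of $U$) is the right one, but the two places where you wave your hands are exactly where the theorem lives. In the uniform case, the claim that ``the shift is an integer multiple $t\ell$'' is unjustified: the relative offset between the block grids of the two copies of $U$ is $|U|\bmod\ell$, and uniformity gives no reason for this to vanish. When $|U|\not\equiv 0\pmod{\ell}$, a full block $h(a_i)$ lying in one copy of $U$ reappears in the other copy straddling the boundary of some $h(a_j)h(a_{j+1})$ with $a_j\neq a_{j+1}$; writing $h(a_j)=xp$, $h(a_{j+1})=qy$, $h(a_i)=pq$ with $p,q$ nonempty, one finds the square $pp$ in $h(a_ja_i)$ if $a_i\neq a_j$, and otherwise the square $qq$ in $h(a_ia_{j+1})$. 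So the synchronization lemma is indispensable in the uniform case too, whereas you explicitly say that case ``dispenses with'' it; as written your argument only treats aligned squares. (Even there, the clean finish is not ``propagate the periodicity and contradict minimality'' --- shifting the square $(u'Hv')^2$ by less than a period need not leave a square inside $h(a_1\cdots a_{n-1})$ or $h(a_2\cdots a_n)$ --- but the observation that $h(a_1a_{k}a_{n})$ already contains the square $(u'v')^2$, where $u'$ and $v'$ are the shared suffix and prefix forced by matching the partial end blocks; this violates the length-$3$ test directly.)

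The second and more serious gap is the ternary bound. The whole content of part~1 is the sharp constant $5$, and your argument never uses $|A|=3$ except in the innocuous remark that shorter squarefree words are factors of length-$5$ ones. That cannot be repaired by ``bookkeeping'': Crochemore also showed that for alphabets with at least four letters there is \emph{no} absolute constant $k$ such that preserving squarefreeness of squarefree words of length $k$ implies squarefreeness of the morphism (the required test length grows with the ratio of the longest to the shortest image). Hence any correct proof of part~1 must invoke the ternary hypothesis in an essential way; in Crochemore's argument this happens in showing that, over three letters, the length-$5$ test rules out the ``pre-square'' configurations inside a single image $h(a)$ that the synchronization analysis leaves open. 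Your sketch does not identify these configurations, does not say where three-letteredness enters, and therefore cannot reach $n\le 5$. In short: forward direction, minimality, and localization are fine; the synchronization lemma must be stated and used in both parts; and the reduction to the constants $3$ and especially $5$ is missing rather than merely unwritten.
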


In the rest of this paper we work over the ternary alphabet $T = \{0,1,2\}$.
Let $\tau\colon T^* \to T^*$ be the morphism defined by
\begin{equation}\label{Hall-Thue}
\tau(0)=012, \  \tau(1)=02 \  \text{and} \ \tau(2)=1\,.
\end{equation}
The morphism $\tau$ is not squarefree since it does not
preserve squarefreeness of the words $010$  and $02120$.
The images of these are
$01(2020)12$ and $01(210210)12$, respectively, with the squares indicated. Nevertheless,
the  word obtained by iterating the morphism on $0$ gives an infinite squarefree word
as a limit, called the \emph{Thue word}:
\[
\Thue=012 02 1 012 1 02 012 \cdots \ (=\tau^\omega(0)).
\]
(Here we follow~\cite{BSCFR14} in using $\VTM$, for \emph{variant
  of the Thue--Morse word}, to denote this word.)
For the next basic result, see~\cite{Braunholtz,Hal64,Istrail,
MorseHedlund,Thu12} and~\cite{Lothaire}:

\begin{lemma}\label{lemma:Hall_no}
The Thue word $\Thue$ is squarefree  and it does not contain $010$ or $212$ as factors.
\end{lemma}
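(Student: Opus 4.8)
The plan is to prove the stronger assertion that $\Thue$ lies in the class $\mathcal S$ of all words that are squarefree and contain neither $010$ nor $212$ as a factor. The heart of the matter is the closure property that $w\in\mathcal S$ implies $\tau(w)\in\mathcal S$. Granting this, an immediate induction gives $\tau^n(0)\in\mathcal S$ for all $n\ge 1$, since $\tau(0)=012\in\mathcal S$ and $\tau^{n+1}(0)=\tau(\tau^n(0))$. Because $\tau(0)$ begins with $0$, each $\tau^n(0)$ is a prefix of $\tau^{n+1}(0)$, so $\Thue=\lim_n\tau^n(0)$ and every finite factor of $\Thue$ occurs in some $\tau^n(0)$; as membership in $\mathcal S$ is determined by the finite factors, we conclude $\Thue\in\mathcal S$, which is exactly the statement.

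So the work is to prove the closure property, which I would split into two halves. For the first half --- that $\tau(w)$ contains no $010$ and no $212$ --- note that every block $\tau(a)$ has length between $1$ and $3$, so any occurrence of a word of length $3$ inside $\tau(w)$ is already contained in $\tau(u)$ for a factor $u$ of $w$ with $|u|\le 3$; such a $u$ is itself squarefree and free of $010$ and $212$, so there are only finitely many possibilities, and one checks directly (the single-letter images $012$, $02$, $1$ being clean) that none of the words $\tau(u)$ introduces an occurrence of $010$ or $212$. This part is routine bookkeeping.

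The main obstacle is the second half: $\tau(w)$ is squarefree for every $w\in\mathcal S$. The tool is that $C=\{012,02,1\}$ is a bifix code --- no element of $C$ is a prefix or a suffix of another --- so every word in $C^{*}$ has a unique factorization over $C$, the map $\tau$ is injective, and block boundaries can be recovered with bounded delay. I would argue by contradiction: among all $w\in\mathcal S$ for which $\tau(w)$ has a square $uu$, pick one minimizing $|u|$. If $|u|$ is below a fixed bound, the square sits inside $\tau(u')$ for a short factor $u'$ of $w$ constrained by membership in $\mathcal S$, and a finite inspection rules it out. If $|u|$ is large compared with $3$, extend $uu$ on both sides to a block-aligned factor of $\tau(w)$, necessarily of the form $\tau(v)$ with $v$ a factor of $w$, and compare the $C$-factorizations of the two copies of $u$ sitting inside $\tau(v)$; the bifix property forces these factorizations to agree up to a shift of bounded size, and the largeness of $|u|$ forces that shift to be a whole number of blocks and then to be zero, so unwinding $\tau$ produces a square in $v$, hence in $w$ --- contradicting $w\in\mathcal S$. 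Equivalently, one can package this as a Crochemore-type reduction of the closure property to a finite check on the images of squarefree words of bounded length avoiding $010$ and $212$; the non-uniformity of $\tau$ is what makes this delicate, and handling the phase-shifted alignments of the two halves of the square is the step that requires real care.

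Combining the two halves gives the closure property, and then the induction of the first paragraph yields $\Thue\in\mathcal S$: $\Thue$ is squarefree and avoids $010$ and $212$.
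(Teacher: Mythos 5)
The paper does not actually prove this lemma: it is quoted as a classical result with citations to Thue, Hall, Braunholtz, Istrail, Morse--Hedlund and Lothaire, so there is no ``paper proof'' to match against. Your sketch reconstructs what is essentially Hall's (and Thue's) original argument: show that the set $\mathcal S$ of squarefree ternary words avoiding $010$ and $212$ is closed under $\tau$, then pass to the limit $\Thue=\tau^\omega(0)$. That is the right strategy, and the overall structure (closure property plus induction plus the observation that membership in $\mathcal S$ is a property of finite factors) is correct. Two remarks on the details. First, the $010$/$212$-avoidance half is even easier than you make it: in any word of $\{012,02,1\}^*$ the letter $0$ only occurs as the first letter of a block and $2$ only as the last, so $010$ and $212$ are excluded for \emph{every} $w$, with no case analysis on short factors needed. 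Second, the squarefreeness half is where all the work is, and your sketch stops exactly there; the bifix property of $C=\{012,02,1\}$ alone does not give synchronization for a non-uniform code, so you should say explicitly why parsings of a common factor must agree -- namely that in $\tau(w)$ every $0$ begins a block, every $2$ ends a block, $11$ never occurs (it would force $22$ in $w$), and a $1$ preceded by $0$ must be the interior of $012$ while a $1$ preceded by $2$ must be the block $1$; hence the block boundaries inside any factor are determined by one letter of context on each side, which is what forces the two copies of $u$ to carry the same factorization and lets you pull the square back to $w$. With that point made precise, and with the hypotheses on $w$ (no $010$, no $212$) used to kill the short-square cases that the paper itself exhibits as the obstructions ($\tau(010)$ and $\tau(02120)$ contain squares), the argument is complete and agrees with the classical proofs the paper cites.
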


Blanchet-Sadri et al.~\cite[Theorem~3]{BSCFR14} proved the following:

\begin{lemma}\label{lemma:blanchet-sadri}
  For each odd $k$ and each factor
  $w$ of $\VTM$, the set of positions at which $w$ occurs in $\VTM$
  contains all congruence classes modulo $k$.
\end{lemma}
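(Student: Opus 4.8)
The plan is to use the self-similarity of $\VTM=\tau^{\omega}(0)$ to plant a controlled copy of $w$ in every $\tau^n(0)$-block, reducing the statement to a single distinguished factor, then to transfer the question to the Thue--Morse word $\mathbf t=t_0t_1t_2\cdots$ (where $t_n\in\{0,1\}$ is the parity of the number of $1$'s in the binary expansion of $n$), and to finish with an elementary group-theoretic argument. For the reduction, write $\VTM=v_0v_1v_2\cdots$ with each $v_i$ a letter and, for a word $u$, set $\|u\|:=3|u|_0+2|u|_1+|u|_2$. A direct check gives $|\tau(a)|=3-a$ for each letter $a$, hence $|\tau(u)|=\|u\|$, and likewise $\|\tau(u)\|=2\|u\|$; therefore $|\tau^{n}(u)|=2^{\,n-1}\|u\|$ for every $n\ge 1$. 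Now fix an arbitrary factor $w$ of $\VTM$ and an odd $k$. Since the $\tau^{n}(0)$ form increasing prefixes of $\VTM$, $w$ is a factor of $\tau^{n}(0)$ for all large $n$; fix such an $n\ge 1$ and let $\delta$ be the offset of an occurrence of $w$ inside $\tau^{n}(0)$. From $\VTM=\tau^{n}(\VTM)=\tau^{n}(v_0)\tau^{n}(v_1)\cdots$, the block $\tau^{n}(v_i)$ begins at position $2^{\,n-1}S_i$ with $S_i:=\|v_0v_1\cdots v_{i-1}\|$; and whenever $v_i=0$ this block equals $\tau^{n}(0)$, so $w$ occurs in $\VTM$ at position $2^{\,n-1}S_i+\delta$. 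Thus the positions of $w$ contain $\{\,2^{\,n-1}S_i+\delta : v_i=0\,\}$, and since $k$ is odd $2^{\,n-1}$ is invertible mod $k$, so it suffices to show that $\{\,S_i : v_i=0\,\}$ meets every residue class modulo $k$.

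Next we pass to $\mathbf t$. One uses the classical identity $v_j=1+t_j-t_{j+1}$ for all $j$: the sequence $(1+t_j-t_{j+1})_{j\ge0}$ begins with $0$ and is fixed by $\tau$ (a short case analysis, on whether the $\tau$-image of the $j$-th letter has length $1$, $2$, or $3$, i.e.\ on the gap between consecutive $0$'s of $\mathbf t$), hence equals $\VTM$; here the key input is that the $j$-th occurrence of $0$ in $\mathbf t$ sits at position $q_j=2j+t_j$, which follows at once from the block structure $\mathbf t=\mu(\mathbf t)$ with $\mu\colon0\mapsto01,\,1\mapsto10$. Then $3-v_j=2-t_j+t_{j+1}$, so telescoping gives $S_i=\sum_{j<i}(3-v_j)=2i+t_i=q_i$, and $v_i=0$ exactly when $(t_i,t_{i+1})=(0,1)$. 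Using $t_{2^{a}m+j}=t_m\oplus t_j$ for $0\le j<2^{a}$ (addition in binary without carries), one checks that whenever $t_m=0$ one has $t_{4m}=0$, $t_{4m+1}=t_{4m+2}=1$, and $t_{4m+3}=0$; hence $4m$ is an occurrence of $0$ in $\mathbf t$ whose next occurrence of $0$ is $4m+3$, i.e.\ the corresponding gap is $3$, so the associated index $i$ (with $q_i=4m$) satisfies $v_i=0$ and $S_i=4m$. Therefore $\{\,S_i:v_i=0\,\}\supseteq\{\,4m:t_m=0\,\}$, and since $4$ is invertible modulo the odd $k$, it is enough to prove that $\{\,m:t_m=0\,\}$ meets every residue class modulo $k$.

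For this arithmetic core, let $P:=\{\,m\bmod k:t_m=0\,\}\subseteq\mathbb Z/k\mathbb Z$. From $t_{2m}=t_m$ we get $2P\subseteq P$, and since $k$ is odd the doubling map is a bijection of the finite group $\mathbb Z/k\mathbb Z$, so $2^{a}P=P$ for all $a\ge0$. From $t_{2^{a}m+j}=t_m\oplus t_j$ with $t_j=0$ we get $2^{a}P+j\subseteq P$, hence $P+j\subseteq P$, hence (finiteness) $P+j=P$. Taking $a=3$ and $j\in\{3,5\}$ — both less than $8$, with $t_3=t_5=0$ since $3=(11)_2$ and $5=(101)_2$ each have an even number of $1$'s — shows that $P$ is invariant under translation by $3$ and by $5$. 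As $\gcd(3,5)=1$, the stabilizer $\{c\in\mathbb Z/k\mathbb Z:P+c=P\}$ is all of $\mathbb Z/k\mathbb Z$, so $P$ is a union of cosets of the full group; since $0\in P$ (as $t_0=0$), we conclude $P=\mathbb Z/k\mathbb Z$. Unwinding the two reductions proves the lemma.

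The step I expect to be the main obstacle is the reduction in the first paragraph together with the identification in the second: for a general factor $w$ one has no control over where $w$ occurs, and the whole point is that the substitution lets one embed $w$ into every $\tau^{n}(0)$-block, so that the residue classes hit by occurrences of $w$ are forced by those hit by the much more rigid set $\{\,S_i:v_i=0\,\}$, which — through the Thue--Morse identity $v_j=1+t_j-t_{j+1}$ and the relation $S_i=q_i$ — contains, up to an invertible rescaling, a set of the clean shape $\{\,4m:t_m=0\,\}$. Once one is there, the group-closure argument, powered by the fortunate fact that $3$ and $5$ are coprime and both lie in $\{\,m:t_m=0\,\}$, needs no computation at all.
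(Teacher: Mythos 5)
Your proposal is correct, and it is worth noting that the paper itself gives no proof of this lemma at all --- it is quoted from Blanchet-Sadri et al.~\cite{BSCFR14}, so your argument is a genuinely self-contained alternative to a citation. I checked the key computations and they all hold: $\|\tau(u)\|=2\|u\|$ (hence $|\tau^n(u)|=2^{n-1}\|u\|$), so the reduction from an arbitrary factor $w$ to the set $\{S_i : v_i=0\}$ of starting positions of $\tau^n(0)$-blocks is valid because multiplication by $2^{n-1}$ and translation by $\delta$ are bijections mod odd $k$; the identity $v_j=1+t_j-t_{j+1}$ is the classical coding of $\VTM$ by first differences of Thue--Morse (your fixed-point sketch via $q_j=2j+t_j$ is the standard way to verify it, e.g.\ $u_{2j}=2t_j$, $u_{2j+1}=2-t_j-t_{j+1}$); the telescoping $S_i=2i+t_i=q_i$ and the gap criterion $q_{i+1}-q_i=3-v_i$ are right, so $t_m=0$ does force $v_{2m}=0$ with $S_{2m}=4m$; and the closing group argument is clean --- $P=\{m \bmod k : t_m=0\}$ is stable under doubling and under translation by $3$ and $5$ (both Thue--Morse zeros below $2^3$), hence under translation by $\gcd(3,5)=1$, hence is all of $\mathbb{Z}/k\mathbb{Z}$ since it contains $0$. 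The only step left as a sketch is the identity $v_j=1+t_j-t_{j+1}$, but it is well known and the indicated case analysis goes through, so I would not count this as a gap. Compared with relying on the cited theorem, your route has the advantage of reducing the combinatorial statement to a two-line algebraic fact about the coset structure of $\{m : t_m=0\}$, at the cost of invoking the Thue--Morse coding of $\VTM$.
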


\section{Problem~\ref{prob1}}
In this section we show that the word $\Thue$ gives a positive
solution to Problem~\ref{prob1}.

\begin{theorem}
  For each $k\geq 2$ the word $[\Thue]_k$ contains
  either the square $00$ or the square $22$.
\end{theorem}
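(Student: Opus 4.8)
The plan is to treat even and odd $k$ separately, after recording two structural facts about $\VTM$. \emph{Fact~1: $[\VTM]_2$ is the Thue--Morse word over $\{0,2\}$; precisely $[\VTM]_2 = \sigma^\omega(0)$, where $\sigma$ is the morphism $0\mapsto 02,\ 2\mapsto 20$.} Since $|\tau^2(0)|=6$, $|\tau^2(1)|=4$, $|\tau^2(2)|=2$ are all even and $\VTM=\tau^2(\VTM)$, every $\tau^2$-block of $\VTM$ starts at an even position; reading off the even-indexed letters of the blocks shows $[\VTM]_2 = h(\VTM)$ for the morphism $h\colon 0\mapsto 022,\ 1\mapsto 02,\ 2\mapsto 0$. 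A short check gives $h\circ\tau = \sigma\circ h$, so $h(\VTM)=h(\tau^\omega(0))=\sigma^\omega(h(0))=\sigma^\omega(0)$. I will use that every even-indexed letter of $\VTM$ lies in $\{0,2\}$, and that $u:=\sigma^\omega(0)$ satisfies $u_{2n}=u_n$ for all $n$ (as $\sigma$ is $2$-uniform with each $\sigma(a)$ beginning with $a$). \emph{Fact~2: deleting all occurrences of $1$ from $\VTM$ leaves $(02)^\omega$.} Two consecutive non-$1$ letters of $\VTM$ cannot both equal the same letter $a$: they would be separated by no $1$ (giving the square $aa$), by exactly one $1$ (giving $010$ or $212$, forbidden by Lemma~\ref{lemma:Hall_no}), or by at least two $1$'s (giving the square $11$). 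Hence the non-$1$ letters alternate, and since $w_0=0$ they spell $(02)^\omega$.

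\textbf{Even $k$.} Write $k=2k'$. Then $[\VTM]_k = \big[[\VTM]_2\big]_{k'}=[u]_{k'}$, and the letters of $[u]_{k'}$ at indices $1$ and $2$ are $u_{k'}$ and $u_{2k'}=u_{k'}$. So $[\VTM]_k$ contains the factor $u_{k'}u_{k'}\in\{00,22\}$. (Equivalently, $w_k=w_{2k}$, both lying in $\{0,2\}$ by Fact~1.)

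\textbf{Odd $k$ ($k\ge3$).} By Lemma~\ref{lemma:blanchet-sadri} every factor of $\VTM$ occurs at some position $\equiv 0\pmod k$, so it suffices to produce a factor of $\VTM$ of length $k+1$ whose first and last letters are equal and lie in $\{0,2\}$ --- equivalently, positions $i<i+k$ of $\VTM$ with $w_i=w_{i+k}\in\{0,2\}$ --- since translating such a factor to a position $\equiv 0\pmod k$ yields $00$ or $22$ in $[\VTM]_k$. By Fact~2, if $i$ and $i+k$ are the $m$-th and $n$-th non-$1$ positions of $\VTM$ (with $m<n$), then $n-m$ equals $k$ minus the number of $1$'s strictly between $i$ and $i+k$, and $w_i=w_{i+k}$ iff $m\equiv n\pmod2$; as $k$ is odd, this holds exactly when that number of $1$'s is odd. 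Since consecutive non-$1$ positions of $\VTM$ differ by $1$ or $2$, the task reduces to: \emph{every odd $k\ge3$ is a sum of an even-length block of consecutive terms of the gap sequence $g=(g_r)_{r\ge0}\in\{1,2\}^\omega$ of the non-$1$ positions} (equivalently, $\VTM$ has two $0$'s or two $2$'s at distance $k$).

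This last step carries the weight of the argument, and I expect it to be the main obstacle. One first pins down $g$: using $\VTM=\tau(\VTM)$ together with squarefreeness and Lemma~\ref{lemma:Hall_no}, the $1$'s of $\VTM$ sit at odd positions with successive gaps $2$ or $4$, so $g = 2\,[11]\,B_1\,[11]\,B_2\,\cdots$ with each $B_i\in\{2,222\}$, and the block-type sequence $(B_i)_i$ is a primitive substitutive word (after relabeling, a coding of the fixed point of $1\mapsto100,\ 0\mapsto1$). From the resulting finite list of factors of $g$, one then exhibits for each odd $k$ an explicit even-length window of $g$ summing to $k$ --- windows straddling a bounded number of $[11]$- and $B_i$-blocks suffice, with $k=3,5$ immediate. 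Obtaining this structural description of $g$ and carrying out the window bookkeeping is the technical heart; the remaining steps are routine.
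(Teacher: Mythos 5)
Your treatment of even $k$ is correct and complete, and in fact cleaner than the paper's: the identity $[\VTM]_2=\sigma^\omega(0)$ (verified via $h\circ\tau=\sigma\circ h$) together with $u_{2n}=u_n$ gives $w_k=w_{2k}\in\{0,2\}$ for every even $k$ in one stroke, whereas the paper splits the even case into $k=2^ak'$ with $k'\geq 3$ and $k$ a power of $2$, reading the needed facts off Berstel's $2$-DFAO. Your reduction of the odd case via Lemma~\ref{lemma:blanchet-sadri} to exhibiting a length-$(k+1)$ factor $0u0$ or $2u2$ of $\VTM$ is also exactly the paper's reduction.

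The genuine gap is in the odd case: the existence of such a factor for every odd $k\geq 3$ --- equivalently, your claim that every odd $k\geq3$ is the sum of an even-length window of the gap sequence $g$ --- is precisely the combinatorial heart of the theorem, and you do not prove it; you only assert that a structural description of $g$ plus ``window bookkeeping'' would yield it, and you flag this yourself as the main obstacle. This is exactly the statement the paper discharges by a Walnut computation (the automaton of Figure~\ref{walnut_aut} certifies the predicate for all $k\geq2$). Note also that the bookkeeping is not routine: a length-$\ell$ window (with $\ell$ even) sums to $k$ iff it contains exactly $k-\ell$ twos, which forces $k/2\leq\ell\leq k$ and, since the density of $2$'s in $g$ is $1/2$, essentially forces $\ell\approx 2k/3$; making this work for all odd $k$ requires a discrete intermediate-value argument together with balance/discrepancy bounds for the letter $1$ in $\VTM$, none of which is supplied. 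Until that step is carried out (or replaced by the automatic-sequence verification), the proof covers only the even case.
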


\begin{proof}
  The first part of the proof relies on the fact that $\VTM$ is a
  $2$-automatic sequence.  Berstel~\cite{Ber79} studied several
  different ways to generate the sequence $\VTM$; in particular, he
  showed that $\VTM$ is generated by the $2$-DFAO (\emph{deterministic
    finite automaton with output}) in Figure~\ref{vtm_aut}.  The
  automaton takes the binary representation of $n$ as input (reading
  from most significant digit to least significant digit), and if
  the computation ends in a state labeled $a$, the automaton outputs
  $a$, indicating that $v_n = a$.

  \begin{figure}[htb]
    \centering
    \includegraphics[scale=0.75]{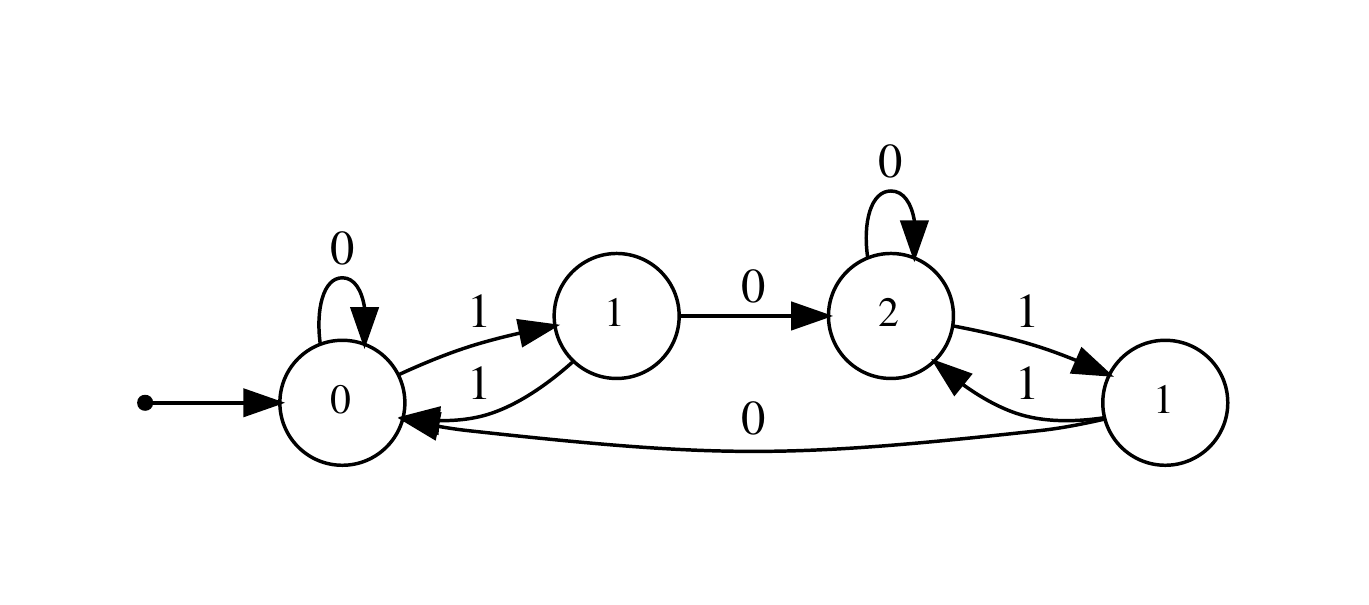}
    \caption{$2$-DFAO for $\VTM$}\label{vtm_aut}
  \end{figure}

  Since $\VTM$ is an automatic sequence, we can use
  Walnut~\cite{Walnut} to verify that it has certain combinatorial
  properties.  We verify with Walnut that for every $k\geq 2$, the
  sequence $\VTM$ contains a length $k+1$ factor of the form $0u0$ or
  $2u2$.  The Walnut command to do this is:
  \begin{center}
  \texttt{eval same\_first\_last ``Ei (VTM[i]=@0 \&
    VTM[i+k]=@0)|(VTM[i]=@2 \& VTM[i+k]=@2)'';}
  \end{center}
  The Walnut output for this command is the automaton in
  Figure~\ref{walnut_aut}, which shows that the given predicate holds
  for all $k\geq 2$.

  \begin{figure}[htb]
    \centering
    \includegraphics[scale=0.75]{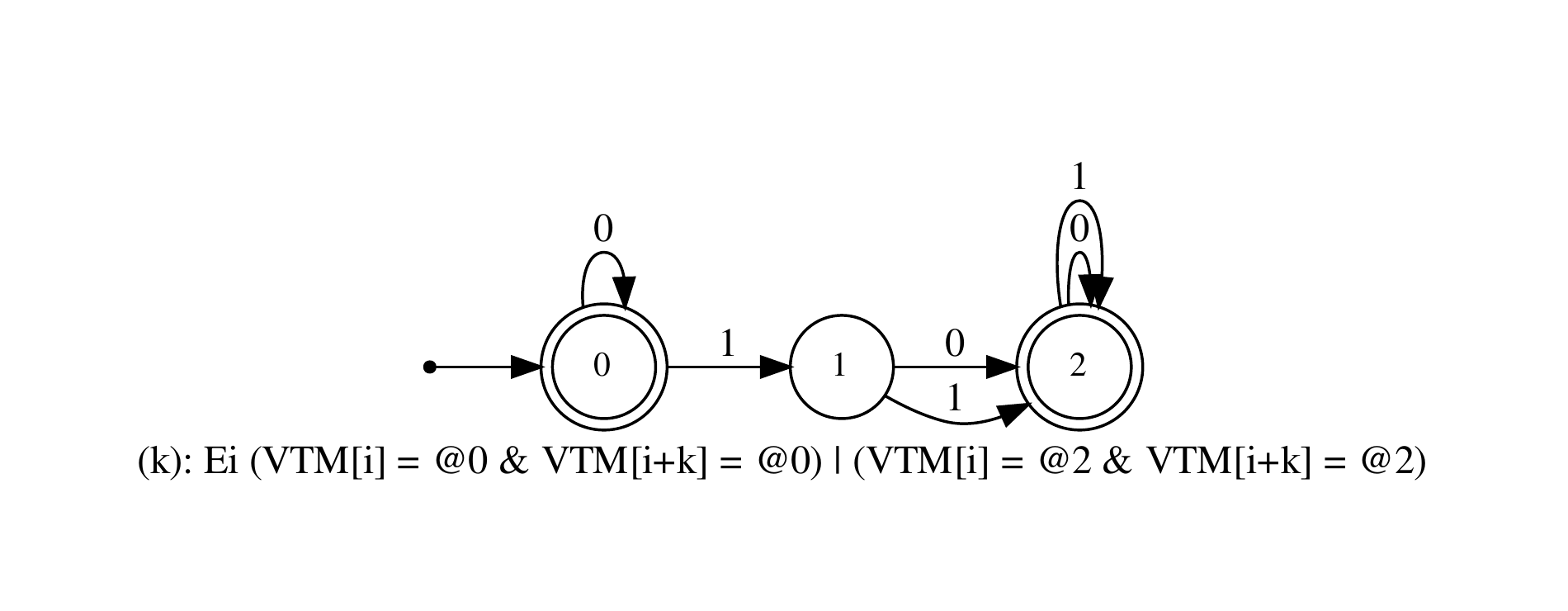}
    \caption{Walnut output automaton}\label{walnut_aut}
  \end{figure}

  Suppose $k$ is odd.  To complete the proof, it suffices to show that
  $\VTM$ contains an occurrence of the length $k+1$ factor $0u0$ or
  $2u2$ at a position congruent to $0$ modulo $k$.  This follows
  immediately from Lemma~\ref{lemma:blanchet-sadri} and so we have
  established the claim for all odd $k\geq 2$.

  If $k$ is even, write $k = 2^ak'$, where $k'$ is odd.  Suppose that
  $k'\geq 3$.  We have already seen that $\VTM$ contains an
  occurrence of a length $k'+1$ factor $0u0$ or $2u2$ at a position $i
  \equiv 0 \pmod{k'}$.  From the automaton generating $\VTM$, we see
  that if $v_i=0$ (resp.\ $v_i=2$), then $v_{2^ai}=0$
  (resp.\ $v_{2^ai}=2$), which establishes the claim for $k'\geq 3$.

  Finally, suppose that $k$ is a power of $2$.  Then the binary
  representations of $k$ and $2k$ have the form $10^\ell$ and
  $10^{\ell+1}$ respectively, for some $\ell\geq 1$.  From the
  automaton generating $\VTM$, we see that $v_k = v_{2k} = 2$, as
  required.  This completes the proof.
\end{proof}

\section{Problem~\ref{prob2}}
In this section we give positive solutions to Problem~\ref{prob2} for
two different pairs $(p,q)$; i.e.\ $(p,q)=(3,11)$ and $(p,q)=(5,6)$.
We first introduce some notation.  Given a morphism $h:A^* \to A^*$
and a positive integer $p$ such that $p$ divides $|h(a)|$ for all $a
\in A$, let $h_p$ be the morphism defined by $h_p(a) = [h(a)]_p$.

\begin{theorem}\label{pq_3_11}
  For $(p,q) = (3,11)$ there exists an infinite ternary squarefree
  word $w$ such that $[w]_p$ and $[w]_q$ are both squarefree.
\end{theorem}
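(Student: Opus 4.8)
The plan is to find a single squarefree morphism $h\colon T^*\to T^*$, uniform of length divisible by both $p=3$ and $q=11$ (hence of length a multiple of $33$), such that the two ``decimated'' morphisms $h_3$ and $h_{11}$ are \emph{also} squarefree. If such an $h$ exists, then taking $w=h(\Thue)$ (or more safely $w=h^\omega(0)$ if $h$ has a fixed point, or $h$ applied to any infinite squarefree ternary word) gives an infinite squarefree $w$; and because $h$ is uniform of length a multiple of $3$, the subsequence $[w]_3$ equals $h_3$ applied to the same infinite squarefree word, which is squarefree since $h_3$ is a squarefree morphism — and likewise $[h(u)]_{11}=h_{11}(u)$ is squarefree. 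The key elementary observation here is the commutation identity $[h(u)]_p = h_p(u)$ whenever $h$ is uniform of length a multiple of $p$, which follows by reading off positions $0,p,2p,\dots$ block by block through the images $h(u_0)h(u_1)\cdots$.

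\medskip

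The steps I would carry out, in order, are: first, exhibit an explicit uniform morphism $h$ of length $33$ (or some convenient multiple of $33$) on $\{0,1,2\}$, found by computer search; second, verify $h$ is squarefree — by Theorem~\ref{Crochemore}(2) it suffices to check that $h$ maps the finitely many squarefree ternary words of length $3$ to squarefree words; third, compute $h_3$ and $h_{11}$ explicitly (each of length $11$ and $3$ respectively) and check by the same criterion (Theorem~\ref{Crochemore}(2), since both are uniform) that $h_3$ and $h_{11}$ are squarefree; fourth, verify $h$ has an infinite squarefree fixed point or can be applied to $\Thue$, so that $w$ is a genuine infinite squarefree word; fifth, invoke the commutation identity to conclude $[w]_3=h_3(w')$ and $[w]_{11}=h_{11}(w')$ are squarefree, where $w'$ is whatever squarefree word $h$ was applied to.

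\medskip

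The main obstacle is purely the search: it is not obvious a priori that a uniform morphism of length $33$ exists that is simultaneously squarefree and has both $h_3$ and $h_{11}$ squarefree, and one may need to try larger multiples of $33$ or restrict to morphisms with extra structure (e.g.\ prefix codes, or images avoiding $010$ and $212$ as in Lemma~\ref{lemma:Hall_no}) to make the search tractable and to ensure squarefreeness. Once a candidate $h$ is in hand, however, every verification reduces to a finite check via Theorem~\ref{Crochemore}, so the remainder of the argument is routine. I would present the explicit $h$, its two decimations, and simply assert that the finite squarefreeness checks pass (they are easily done by machine), noting that $h_3$ and $h_{11}$ being uniform lets us use the length-three criterion rather than the length-five one.
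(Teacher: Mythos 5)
Your logical skeleton is sound: if a morphism $h$ whose images all have length divisible by $33$ is squarefree and has $h_3$ and $h_{11}$ squarefree as morphisms, then $w=h(\Thue)$ works via the commutation identity $[h(u)]_p=h_p(u)$, and this is exactly the scheme the paper uses for the pair $(5,6)$. But for $(3,11)$ there is a genuine gap: you never exhibit the morphism, and the existence of one in the restricted class you propose is precisely what is in doubt. Two concrete points. First, length exactly $33$ is impossible: $h_{11}$ would then be a $3$-uniform ternary morphism, and by Brandenburg's result (cited in the paper) no uniform squarefree morphism on $T$ has length less than $11$, so your search could not even begin before length $132$. Second, and more seriously, the paper's actual proof indicates that your scheme does not go through at all: its morphism $h$ is \emph{not} uniform (image lengths $198$, $132$, $66$), and its decimations are \emph{not} squarefree morphisms --- for instance $h_{11}$ factors as $x_{11}\circ\tau$ with $\tau$ the non-squarefree Thue morphism. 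The paper instead proves only the weaker statement that $h$, $h_3$, and a conjugate of $h_{11}$ map the \emph{specific} word $\VTM$ to squarefree words, and this requires a new criterion (Theorem~\ref{img_of_vtm}) whose synchronization condition explicitly tolerates a ``bad'' configuration at $abc=010$ precisely because $\VTM$ avoids the factors $010$ and $212$. That weakening --- from ``squarefree morphism'' to ``morphism that is squarefree on $\VTM$'' --- is the key idea of the proof, and it is absent from your proposal; asserting that ``the finite squarefreeness checks pass'' for an unspecified $h$ does not discharge it.
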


We first define the following morphism $h$ on $T^*$:
\bigskip
{\small
\begin{align*}
0 \mapsto
&012101202102010212012101201021202101201020120210121020120212010210\\
&121021201020120212010210121020120212012102010210121020120212010210\\
&120212012102010212012101201021202102010212012101202102010210120212\\
1 \mapsto
&012101202102010212012101201021202101201020120210121020120212010210\\
&120212012102010212012101201021202102010212012101202102010210120212\\
2 \mapsto
&010210121021201020121021202101201021202102012101202102010210120212
\end{align*}
}

The next result gives a condition for a morphism $g$ to map $\VTM$ to a
squarefree word.  We will use it to show that $h(\VTM)$, $h_3(\VTM)$,
and $h_{11}(\VTM)$ are all squarefree, which is sufficient to
establish Theorem~\ref{pq_3_11}.

\begin{theorem}\label{img_of_vtm}
Suppose that $g:T^*\rightarrow A^*$ is such that
\begin{enumerate}
\item \label{squarefree} $g(u)$ is square-free for every factor $u$ of $\VTM$ of length $5$.
\item \label{010} The only solution of
\begin{eqnarray*}
g(a)&=&uv\\
g(b)&=&zv\\
g(c)&=&zw\\
a,b,c&\in& T,
\end{eqnarray*}
such that $v \neq w$ and $u \neq z$ is $abc=010$.
\item\label{suffix} For $a\in T$ we can write $g(a)=uv_a$ such that whenever $w\in T^*$ and $g(w)=xv_az$, then $z\in g(T^*)$.
\end{enumerate}
Then $g(\VTM)$ is square-free.
\end{theorem}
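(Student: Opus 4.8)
The plan is to argue by contradiction: suppose $g(\VTM)$ contains a square $YY$, and derive a forbidden configuration in $\VTM$ itself. Write $\VTM = a_0 a_1 a_2 \cdots$ and let the occurrence of $YY$ begin inside $g(a_i)$ and end inside $g(a_j)$. Because condition~\ref{squarefree} guarantees (via Theorem~\ref{Crochemore}(1)) that $g$ is squarefree as a morphism, any square in $g(\VTM)$ cannot come from a squarefree preimage; so $YY$ must straddle several blocks and its ``period'' $|Y|$ cannot be a multiple of a block boundary pattern in a trivial way. The standard technique here (as in Crochemore's lemma and its descendants) is a \emph{synchronization} argument: I want to show that the factorization of $g(\VTM)$ into blocks $g(a_0)g(a_1)\cdots$ is essentially unique around the square, so that the square forces a near-square in $\VTM$.

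First I would set up the combinatorics of the square. Let $YY$ occupy positions of $g(\VTM)$ so that the first $Y$ overlaps blocks $g(a_i),\dots,g(a_{k})$ and the second $Y$ overlaps $g(a_{k}),\dots,g(a_{j})$ (with possible sharing of the boundary block). The period $|Y|$ shifts block $g(a_m)$ onto a region overlapping $g(a_{m'})$ for the corresponding $m'$; comparing letters, for each such $m$ one gets an equation of the form $g(a_m) = x\, (\text{part of } g(a_{m'}))$ and $g(a_{m'}) = (\text{matching part})\, y$, i.e. exactly the shape $g(a)=uv$, $g(b)=zv$, $g(c)=zw$ appearing in condition~\ref{010} whenever the alignment is ``misaligned'' (the two blocks don't start at corresponding offsets). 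Condition~\ref{010} says the \emph{only} way three consecutive blocks of $\VTM$ can exhibit the overlap pattern $uv / zv / zw$ with $v\ne w$, $u\ne z$ is if those three letters are $010$. Since by Lemma~\ref{lemma:Hall_no} the factor $010$ does \emph{not} occur in $\VTM$, every relevant misalignment is impossible, so the block boundaries on the two halves of the square must be synchronized: $a_i a_{i+1}\cdots = a_{i'} a_{i'+1}\cdots$ for the stretch covered, with the shift being an integer number of blocks. Here condition~\ref{suffix} is what lets me push the synchronization all the way to the right end of the square: whenever $g(w)$ ends with the designated suffix $v_a$ of some $g(a)$ followed by extra letters $z$, those extra letters $z$ are themselves a concatenation of images, so the block decomposition is forced rather than allowed to ``drift'' at the tail.

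Having forced synchronization, the shift $|Y|$ equals $|g(a_i\cdots a_{k-1})|$ for a run of $\VTM$-blocks, and the letter-by-letter equality of the two $Y$'s forces $a_i a_{i+1}\cdots a_{k-1} = a_{k} a_{k+1}\cdots a_{2k-i-1}$ as words over $T$ — a square in $\VTM$, contradicting squarefreeness of $\VTM$ (Lemma~\ref{lemma:Hall_no}). The only loose ends are the two partial blocks at the extreme ends of $YY$ (a proper suffix of $g(a_{i-1})$ or $g(a_i)$ at the very start, a proper prefix of $g(a_j)$ at the very end): these are handled by noting that if the synchronized block-square is $SS'$ with $S=S'$ as $\VTM$-words, the leftover prefix/suffix letters extend $S$ to $S'$ on one side, and condition~\ref{suffix} (applied at the right end) plus the fact that $g$ is injective on such decompositions (again from~\ref{010}, which rules out two letters having the same image) rules out a genuine ``fractional'' square, or else produces a shorter square in $\VTM$ after trimming.

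I expect the main obstacle to be the bookkeeping of the \emph{partial} blocks at the two ends of the square and verifying that the misalignment equations really do fall under the hypothesis of condition~\ref{010} in every case — in particular checking that the words $u,z$ (the ``heads'') are genuinely unequal and $v\ne w$ (so that the clause is applicable and its conclusion $abc=010$ can be invoked), rather than degenerating into a case the hypothesis doesn't cover. This is the classical delicate point in all such ``$h(\mathbf{t})$ is squarefree'' proofs (Bean–Ehrenfeucht–McNulty, Crochemore, Berstel), and I would organize it as: (i) reduce to the case where the left ends of the two $Y$'s sit at corresponding offsets in their blocks (else immediately apply~\ref{010}); (ii) in that aligned case, read off the $\VTM$-square directly; (iii) handle the end blocks by the suffix condition~\ref{suffix}. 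Steps (i) and (iii) are where condition~\ref{010} and condition~\ref{suffix} respectively earn their keep, and condition~\ref{squarefree} disposes of the base case where the square fits within few enough blocks to be governed by Crochemore's length-$5$ criterion.
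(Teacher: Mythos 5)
There is a genuine gap, and it sits exactly where you flagged the ``main obstacle'': the two partial blocks at the ends of the square. Your endgame is to synchronize the block decomposition and then read off a square $a_i\cdots a_{k-1}=a_k\cdots$ in $\VTM$, with the leftover fractional blocks ``ruled out'' or ``trimmed''. That cannot work here, because the fractional blocks are genuinely present and the two halves of the square do \emph{not} project to equal words of $T^*$. In the paper's argument the square is written as $A_0''A_1\cdots A_{j-1}A_j'=A_j''A_{j+1}\cdots A_{2j-1}A_{2j}'$ with $A_i=g(a_i)$; synchronization (obtained from condition~\ref{suffix}, not from condition~\ref{010}) gives $a_i=a_{j+i}$ only for the interior indices $1\le i\le j-1$. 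The three \emph{split} blocks $g(a_0)=uv$, $g(a_j)=zv$, $g(a_{2j})=zw$ are precisely the configuration of condition~\ref{010}, whose conclusion is $a_0a_ja_{2j}=010$ --- so $a_0\ne a_j$ and no square in $\VTM$ ever appears. Note also that $a_0,a_j,a_{2j}$ are far apart in $\VTM$, so the absence of $010$ as a \emph{factor} of $\VTM$ (which you invoke) is irrelevant. The actual contradiction is reached by combining $a_0=a_{2j}=0$, $a_j=1$ with $a_{i}=a_{j+i}$ and the squarefreeness of $\VTM$ to force $a_{j-1}=a_{j+1}=2$, i.e.\ the factor $212$, which $\VTM$ avoids by Lemma~\ref{lemma:Hall_no}. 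Your proposal never uses the $212$/$010$-avoidance of $\VTM$, and without it the proof cannot close.

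A second, related error: you assert that condition~\ref{squarefree} makes $g$ a squarefree \emph{morphism} via Theorem~\ref{Crochemore}(1). It does not --- condition~\ref{squarefree} only concerns length-$5$ factors of $\VTM$, not all squarefree words of length $5$, and the intended applications ($g=h,h_3,\hat h_{11}$) are \emph{not} squarefree morphisms; the whole point of the theorem is to handle morphisms that fail on words like $010$ which happen not to occur in $\VTM$. Condition~\ref{squarefree} is used only to get the lower bound $m\ge 4$ on the length of a minimal bad preimage (hence a full block inside one half of the square) and the injectivity of $g$ on letters. Your step (i)--(iii) skeleton is structurally close to the paper's, but the roles you assign to conditions~\ref{010} and~\ref{suffix} are swapped relative to the actual proof, and the final contradiction you aim for is the wrong one.
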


\begin{remark}\label{rem:h_h3_h11}
Let $h$, $h_3$, $h_{11}$ be given as above. Let $\hat{h}_{11}$ be defined on letters by $\hat{h}_{11}(a)=0^{-1}h_{11}(a)0$.
The theorem's conditions hold for $g=h,h_3,\hat{h}_{11}$.
\begin{itemize}
\item For $g=h$, condition~\ref{suffix} is witnessed by $v_a=02102010210120212$ for $a=0,1,2$.

\item For $g=h_3$, condition~\ref{suffix} is witnessed by $v_a=210212$ for $a=0,1,2$.

\item For $g=\hat{h}_{11}$, condition~\ref{suffix} is witnessed by $v_0=v_1=1210$, $v_2=20210$.
\end{itemize}
Thus, in order to establish Theorem~\ref{pq_3_11}, it remains to give the proof of Theorem~\ref{img_of_vtm}.
\end{remark}

\begin{proof}[Proof of Theorem~\ref{img_of_vtm}.]
Let $a_0a_1\cdots a_m$ be a shortest factor of $\VTM$ such that $g(a_0a_1\cdots a_m)$ contains a square.
Some non-empty $xx$ is a factor of $g(a_0a_1\cdots a_m)$. Since $m\geq 4$ by condition~\ref{squarefree},
and $m$ is as small as possible, it follows that $g(a_i)$ is a factor of $x$ for some $i$. 

Using condition~\ref{suffix} repeatedly, write
$$A_0^{\prime\prime}A_1\cdots A_{j-1}A'_j=A_j^{\prime\prime}A_{j+1}\cdots A_{2j-1}A'_{2j}$$
where 
\begin{eqnarray*}
A_i&=&g(a_i),0\leq i\leq 2j,\\
A_i&=&A_i'A_i^{\prime\prime}, i=0,j,2j\\
A_0^{\prime\prime}&=&A_j^{\prime\prime}\\
A_j'&=&A_{2j}'
\end{eqnarray*}
and $m=2j$.

By condition~\ref{squarefree}, $g$ is 1-1, so that $a_i=a_{j+i}$, $1\leq i\leq j-1$.
Since $\VTM$ is square-free, $A_0'\ne A_j'$ and $A_j''\ne A_{2j}''$. By condition~\ref{010}, this implies $a_0=a_{2j}=0$, $a_j=1$. 
Again,  since $\VTM$ is square-free, $a_0\ne a_1$, $a_{j-1}\ne a_j\ne a_{j+1}$, and $a_{2j-1}\ne a_{2j}$. 
Now $0=a_0\ne a_1=a_{j+1}\ne a_j=1$; thus $a_{j+1}=2$. Similarly, $0=a_{2j}\ne a_{2j-1}=a_{j-1}\ne a_j=1$;
thus $a_{j-1}=2$. Then $a_{j-1}a_ja_{j+1}=212$ is a factor of $\VTM$. This is a contradiction.
\end{proof}

\begin{remark}\label{rem:h11}
We also make the following observation concerning the morphism $h_{11}$, which is defined by
\begin{align*}
0 \mapsto &0201021\quad 012021\quad 20121\\
1 \mapsto &0201021\quad 20121\\
2 \mapsto &012021.
\end{align*}
By splitting the images in this way, we see that $h_{11}=x_{11}\circ\tau$, and hence that $h_{11}(\VTM)=x_{11}(\VTM)$,
where $x_{11}$ is the morphism defined by \[0 \mapsto 0201021,\quad 1 \mapsto 012021,\quad 2 \mapsto 20121.\]
One now verifies (using Theorem~\ref{Crochemore}) that $x_{11}$ is a
squarefree morphism, which implies that $h_{11}(\VTM)$ is squarefree.
\end{remark}

\begin{theorem}
  For $(p,q) = (5,6)$ there exists an infinite ternary squarefree
  word $w$ such that $[w]_p$ and $[w]_q$ are both squarefree.
\end{theorem}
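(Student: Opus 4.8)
The plan is to mimic the structure used for the pair $(3,11)$: exhibit an explicit morphism $h'$ on $T^*$ whose images all have length divisible by $\mathrm{lcm}(5,6)=30$, and then show that $h'(\VTM)$, $h'_5(\VTM)$, and $h'_6(\VTM)$ are all squarefree. Taking $w = h'(\VTM)$ then gives the result, since $[w]_5$ is (up to the obvious block decomposition) governed by $h'_5(\VTM)$ and similarly $[w]_6$ by $h'_6(\VTM)$. As in Remark~\ref{rem:h_h3_h11}, there may be a minor boundary technicality: $[w]_p$ need not literally equal $h'_p(\VTM)$ but rather a word of the form $c^{-1}h'_p(\VTM)c$ for a single letter $c$, so I would first record, for each of $p=5$ and $p=6$, the exact conjugate $\hat{h}'_p$ that arises and check that squarefreeness of $\hat{h}'_p(\VTM)$ is what is actually needed.

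For the squarefreeness proofs themselves, I would invoke Theorem~\ref{img_of_vtm} with $g$ ranging over $h'$, $\hat{h}'_5$, and $\hat{h}'_6$. This reduces each case to three finite checks: (i) $g(u)$ is squarefree for every length-$5$ factor $u$ of $\VTM$ — a finite computation since $\VTM$ has finitely many factors of each length; (ii) the only solution to the overlap system $g(a)=uv$, $g(b)=zv$, $g(c)=zw$ with $v\ne w$, $u\ne z$ is $abc=010$ — again a finite check over the $27$ triples and all splittings of the (fixed-length) images; and (iii) the suffix-synchronization condition, for which I would exhibit an explicit common suffix $v_a$ for each letter, exactly as was done for $h$, $h_3$, $\hat{h}_{11}$. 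So the proof reduces, modulo finding the morphism, to stating three short witnessed verifications.

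Alternatively — and this is probably cleaner where it works — I would look for a factorization $h'_p = y_p\circ\tau$ through the Thue morphism, as in Remark~\ref{rem:h11}: if the length-$p$ blocks of $h'$ line up with the images $\tau(0)=012$, $\tau(1)=02$, $\tau(2)=1$, then $h'_p(\VTM)=y_p(\VTM)$ for a shorter morphism $y_p$, and one can then try to verify directly via Theorem~\ref{Crochemore} that $y_p$ is a squarefree morphism (checking squarefree words of length $5$ in general, or length $3$ if $y_p$ is uniform). If both $y_5$ and $y_6$ turn out to be squarefree morphisms and $h'$ itself maps $\VTM$ to a squarefree word, the whole theorem follows with essentially no case analysis beyond Crochemore's finite tests.

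The main obstacle is the search for a suitable $h'$: one needs a single morphism whose image under $\VTM$ is squarefree and which \emph{simultaneously} has good arithmetic-subsequence behaviour at both $p=5$ and $p=6$, and unlike the $(3,11)$ case the two moduli are not coprime, so the $p=5$ and $p=6$ subsequences interact through positions divisible by $30$. I expect this to require a computer search over candidate morphisms (likely of length $30$ or $60$), checking the three conditions of Theorem~\ref{img_of_vtm} for all three of $h'$, $\hat{h}'_5$, $\hat{h}'_6$ at once; once a valid $h'$ is in hand, the verification is routine and I would simply display the morphism together with the witnessing suffixes $v_a$ and state that the conditions of Theorem~\ref{img_of_vtm} have been checked, exactly in the style of Remark~\ref{rem:h_h3_h11}.
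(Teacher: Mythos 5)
Your proposal is a strategy rather than a proof: the theorem is a pure existence statement, and its entire mathematical content is the explicit morphism that witnesses it. You correctly identify the shape of the argument (find a morphism whose image lengths are divisible by $30$, verify squarefreeness of the three relevant words by finite checks), and you even flag the right technical points (the possible conjugation $\hat{h}'_p$, the finiteness of each verification). But you never produce the morphism, and you acknowledge as much (``the main obstacle is the search for a suitable $h'$''; ``once a valid $h'$ is in hand''). Without the witness, nothing is proved; it is not a priori clear that such a morphism exists at all --- indeed the paper remarks that for the pair $(5,8)$ a backtracking search gets stuck around length $1200$, suggesting the analogous object may \emph{not} exist there. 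So the gap is concrete: the object whose existence is asserted is missing.

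It is also worth noting that the paper takes a simpler route than the one you sketch. Rather than working with $\VTM$-specific machinery (Theorem~\ref{img_of_vtm}, which only guarantees that $g(\VTM)$ is squarefree), the paper exhibits a morphism $h$ (with image lengths $630$, $720$, $720$) such that $h$, $h_5$, and $h_6$ are all squarefree \emph{morphisms} in the strong sense, verified directly by Theorem~\ref{Crochemore}(1) on squarefree words of length five. This stronger property means $h$ can be applied to \emph{any} infinite squarefree word, and since the image lengths are divisible by both $5$ and $6$, one gets $[h(t)]_p = h_p(t)$ exactly, with no conjugation issues. Your alternative of factoring $h'_p$ through $\tau$ is plausible but likewise unsubstantiated without a candidate morphism. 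Either way, the proof is only complete once the morphism is displayed and the finite checks are actually carried out.
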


\begin{proof}
To prove the result it suffices to find a morphism $h$ such that $h$, $h_5$, and $h_6$
are all squarefree (which we can verify using Theorem~\ref{Crochemore}). Such a morphism is given below.

\bigskip
{\small
\begin{align*}
0 \mapsto
&012102120210201021012021201210120210201021012102120121012021201020\\
&120210201021012010201210120212012102120210201202120102012101201021\\
&012102120121012021201020120210201021012010201210120212010201210212\\
&021012102010210120102012021012102120210201021201210120102012021020\\
&102120210120102012021201210212021012102012021201210120102101210201\\
&021202101201020120210121021202102010212012101201021202102010210121\\
&020120212012101201021202102010212012101201020120210201021202101210\\
&212012101201021202102010210121020120212012102010210121021202101201\\
&020120210201021202101210201202120121012010210121020102120121012021\\
&201020120210201021012010201210120212
\end{align*}
}
{\small
\begin{align*}
1 \mapsto
&012102120210201021012021201210120210201021012102120121012021201020\\
&120210201021012010201210120212012102120210201202120102012101201021\\
&012102120121012021201020120210201021012010201210120212010201210212\\
&021012102010210120102012021012102120210201021201210120102120210201\\
&021012102012021201210201021012102120210120102012021201210212021012\\
&102012021201210120102101210201021202101201020120210121021202102010\\
&212012101201021202102010210121020120212012101201021202102010212012\\
&101201020120210201021202101201020120212012102120210121020120212012\\
&101201021202102010210121020120212012102010210121021202101201020120\\
&210201021202101210201202120121012010210121020102120121012021201020\\
&120210201021012102120102012101201021012102120210201210120212
\end{align*}
}
{\small
\begin{align*}
2 \mapsto
&012102120210201021012021201210120210201021012102120121012021201020\\
&120210201021012010201210120212012102120210201202120102012101201021\\
&012102120121012021201020120210201021012010201210120212010201210212\\
&021012102010210120102012021012102120210201021201210120102120210201\\
&021012102012021201210201021012102120210120102012021201210212021012\\
&102012021201210120102101210201021202101201020120210121021202102010\\
&212012101201021202102010210121020120212012101201021202102010212012\\
&101201020120210201021202101210212012101201021202102010210121020120\\
&212012102010210121021202101201020120210201021202101210201202120121\\
&012010210121020102120121012021201020120210201021012010201210120212\\
&012102120210201202120102012101201021012102120210201210120212
\end{align*}
}
\end{proof}

\begin{remark}
  For the pair $(p,q)=(5,8)$ we searched via a backtracking algorithm for
  words of the desired form.  The backtrack appears to get ``stuck''
  around length $1200$, suggesting that there may not be an infinite
  word for the pair $(5,8)$.  This would be somewhat surprising, given
  the existence of an infinite word for the pair $(5,6)$.
\end{remark}

\section{Problems~\ref{prob3} and~\ref{prob4}}
Problem~\ref{prob3} is a special case of Problem~\ref{prob4}, so we
focus on Problem~\ref{prob4}.  We shall show that the least positive
answer for Problem~\ref{prob4} in the ternary case is $p=4$.  We also
state some positive and negative examples for small integers $p$ for
Problem~\ref{prob4}.

\subsection{Problem~\ref{prob4} for $2\leq p\leq 5$ and constant $v$}

\begin{lemma}
Problem~\ref{prob4} has no solution for the moduli $p=2$, $p=3$ and $p=5$.
\end{lemma}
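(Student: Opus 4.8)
The plan is to prove the three non-existence results separately, in each case exploiting the very limited structure of ternary squarefree words. The common strategy is: if $[w]_p = v$ for an infinite word $w$ and $v$ is arbitrary (in particular, $v$ can be taken to be a constant sequence, or more generally any sequence forcing a bad pattern), then $w$ must contain a long factor whose letters in the arithmetic progression with common difference $p$ are all constrained, and I will derive a square in $w$ from this.

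For $p=2$: here $[w]_2 = w_0 w_2 w_4 \cdots = v$. Choose $v$ to be any infinite ternary word at all — say $v = 000\cdots$; then $w_0 = w_2 = w_4 = \cdots = 0$. But then $w_0 w_1 w_2 w_3 = 0 w_1 0 w_3$ with $w_1, w_3 \in \{1,2\}$, and whatever $w_1 w_3$ is, the factor $w_0 w_1 w_2 w_3 w_4 = 0 a 0 b 0$ already forces problems: actually more simply, $w_1$ and $w_3$ lie in $\{1,2\}$, and considering $w_0 w_1 w_2 w_3 w_4 w_5 = 0 w_1 0 w_3 0 w_5$, among the three letters $w_1, w_3, w_5 \in \{1,2\}$ two must be equal, say $w_i = w_{i+2}$ with $i$ odd, giving the square $(w_i 0)(w_{i+2} 0) = (w_i\, 0)^2$ in $w$. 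So $p=2$ is handled by a pigeonhole argument on the positions of even index being all $0$. (If $v$ is not constant, one picks instead a $v$ containing a repeated letter at distance giving the same effect; but the constant $v$ already suffices, which is exactly the case needed for the paragraph title.)

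For $p=3$: now $[w]_3 = w_0 w_3 w_6 \cdots = v$; take $v = 000\cdots$, so $w_{3k} = 0$ for all $k$. Consider the block $w_0 \cdots w_6 = 0\, w_1 w_2\, 0\, w_4 w_5\, 0$. Each of the two length-$2$ blocks $w_1 w_2$ and $w_4 w_5$ is a squarefree ternary word not equal to $00$ and not containing the letter $0$ forbidden... wait, $w_1, w_2$ may be $0$ — no: if $w_1 = 0$ then $w_0 w_1 = 00$ is a square, so $w_1 \neq 0$; similarly $w_2 \neq 0$ (else $w_2 w_3 = 00$), and $w_1 \neq w_2$ (squarefreeness), so $w_1 w_2 \in \{12, 21\}$ and likewise $w_4 w_5 \in \{12, 21\}$. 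If $w_1 w_2 = w_4 w_5$ then $w_3 w_4 w_5 w_6 = 0\, w_4 w_5\, 0 = 0\, w_1 w_2\, 0$ and comparing $w_0 w_1 w_2$ with $w_3 w_4 w_5$ gives the square $(0 w_1 w_2)(0 w_4 w_5) = (0 w_1 w_2)^2$. So $w_1 w_2 \neq w_4 w_5$, i.e. one is $12$ and the other $21$. Extending this to positions $w_7, w_8$ (with $w_9 = 0$) and iterating, the blocks $w_{3k+1} w_{3k+2}$ alternate between $12$ and $21$, so $w = 0\,(12\,0\,21\,0)^{\*}\cdots$ eventually, and then one checks directly that $w$ contains a square — for instance $w_1 \cdots w_6 = 1\,2\,0\,2\,1\,0$ is fine, but $w_2 \cdots w_7 = 2\,0\,2\,\ldots$: actually the cleanest route is to observe $0\,1\,2\,0\,2\,1\,0\,1\,2\,0\cdots$ contains $0\,1\,2\,0\,2\,1\,0\,1\,2\,0$ and the factor $w_3w_4w_5w_6w_7w_8 = 0\,2\,1\,0\,1\,2$ compared with $w_6\cdots = 0\,1\,2\cdots$ doesn't immediately repeat, so instead I will just push the forced alternation one more step and invoke that a ternary word of the form $0 u 0 u^{R} 0 u 0 \cdots$ with $u \in \{12,21\}$ is not squarefree once it has length $\geq 10$ or so — a finite check. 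The main point: the constraint forces an eventually periodic-looking structure that cannot be squarefree.

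For $p=5$: this is the real obstacle and I expect the argument to be heavier. Again take $v = 000\cdots$, so $w_{5k} = 0$ for all $k$; the free blocks are the length-$4$ words $B_k = w_{5k+1} w_{5k+2} w_{5k+3} w_{5k+4}$. Each $B_k$ must be a squarefree ternary word with $0 B_k 0$ squarefree, which restricts $B_k$ to a finite set $S$ of words (e.g. $1202, 1210, 2012, 2101, \ldots$ — those length-$4$ squarefree words starting and ending with a nonzero letter such that no $0$-bordered square is created, and additionally $B_k$ itself must avoid internal squares and the two-letter prefix/suffix conditions with the adjacent $0$'s). Moreover consecutive pairs $0 B_k 0 B_{k+1} 0$ must not contain a square, which rules out certain transitions, giving a finite transition graph $G$ on vertex set $S$. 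The claim is that every bi-infinite path in $G$ yields a word $w = 0 B_0 0 B_1 0 B_2 0 \cdots$ that contains a square — equivalently, $G$ admits no bi-infinite square-free labelling. This reduces to a finite computation: enumerate $S$, build $G$, and check (by the same kind of backtracking / automaton argument used elsewhere in the paper, or by direct case analysis since $|S|$ is small) that no infinite walk avoids all squares in the resulting word. The hard part is organizing this case analysis cleanly; I would structure it by first showing $|S|$ is small (at most around $8$–$12$ words), then noting that squarefreeness of $0 B_k 0 B_{k+1} 0$ together with squarefreeness of the length-$13$ window already kills most transitions, leaving a graph with no infinite path supporting squarefreeness — a verification that can be stated as a short lemma and, if desired, confirmed with Walnut or a direct backtrack as the authors do for $(5,8)$. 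I expect this $p=5$ case to be where essentially all the work lies; $p=2$ and $p=3$ are immediate pigeonhole arguments by comparison.
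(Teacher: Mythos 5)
Your overall strategy coincides with the paper's: take $v=000\cdots$, observe that the forced $0$'s cut $w$ into blocks from a small finite set, and show that no infinite squarefree concatenation of such blocks exists --- via the nonexistence of infinite binary squarefree words for $p=2,3$, and via a finite search for $p=5$. Two points need attention, however.

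First, your $p=2$ pigeonhole is stated incorrectly: among $w_1,w_3,w_5\in\{1,2\}$ two must indeed be equal, but they need not be \emph{consecutive} odd positions (e.g.\ $w_1w_3w_5=121$), so you cannot conclude ``$w_i=w_{i+2}$'' and extract the square $(w_i0)^2$. The repair is easy: either look at four odd positions ($w_1\ne w_3$, $w_3\ne w_5$, $w_5\ne w_7$ over $\{1,2\}$ forces $w_1=w_5$ and $w_3=w_7$, hence the square $(w_10w_30)^2$), or argue as the paper does that $w\in\{01,02\}^\omega$ and that a square in the infinite binary word of blocks lifts to a square in $w$. The same uniform argument handles $p=3$ with blocks from $\{012,021\}$, which is tidier than your forced-alternation-plus-finite-check (though that variant does work: strict alternation makes $w$ periodic with period $012021$, which contains the square $(012021)^2$).

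Second, for $p=5$ you correctly identify the crux --- a finite transition check on the admissible blocks --- but the block set is smaller than your estimate (exactly four blocks: $0$ followed by $1021$, $1201$, $2102$, or $2012$), and you leave the outcome of the check open. The paper carries it out and reports that the longest squarefree ternary words all of whose positions $\equiv 0\pmod 5$ carry the letter $0$ have length $40$. So the check succeeds, but as written your $p=5$ case is a reduction to a computation rather than a completed proof.
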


\begin{proof}
Indeed, suppose first that $w \in T^\NN$ is an infinite
squarefree word such that $[w]_p= 00 \cdots$ is unary for $p \in \{2,3\}$.
Then necessarily $w \in \{01,02\}^\NN$ or $w \in \{012, 021\}^\NN$, respectively.
However, there are no infinite squarefree words in the binary case, and hence the moduli $2$ and $3$
have no solutions.

We then show that Problem~\ref{prob4} has no solution for the modulus $p=5$.
In this case if $[w]_5$ is constant then
\[
w \in  \{01021, 01201, 02102, 02012 \}^\NN.
\]
The longest words having an arithmetic progression of zeros modulo $p=5$
are the following three words of length 40:
\begin{align*}
&01021 01201 02012 02102 01021 01201 02012 02101\,,\\
&01021 01201 02012 02102 01021 01201 02012 02120\,,\\
&01201 02012 02102 01021 01201 02012 02102 01210\,,
\end{align*}
where there are `malapropos' factors $02120$ and $01210$ at the end of the two last words.
\end{proof}

\begin{remark}
If we allow four letters in the alphabet, then Problem~\ref{prob4}
becomes trivial for constant $v=0000\cdots$. Indeed,
let $p\geq 2$, and let
$w\in\{1,2,3\}^\NN$ be any squarefree ternary word. The quaternary word
obtained by adding 0 at the beginning and after every $p-1$ letters of~$w$
has an arithmetic progression of $0$'s modulo $p$.  For arbitrary $v$,
determining precisely the values of $p$ for which Problem~\ref{prob4}
has a solution over a four-letter alphabet remains an open problem.
\end{remark}

For modulus $p=4$ we have a solution for constant $v$.

\begin{theorem}\label{Hallplus}
The Thue word $\Thue$ has an infinite arithmetic progression of $1$'s modulo $4$.
\end{theorem}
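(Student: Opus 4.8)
The plan is to exhibit the positions of $1$'s in $\VTM$ and show that they contain a full arithmetic progression with common difference $4$. Recall that $\VTM = \tau^\omega(0)$ with $\tau(0)=012$, $\tau(1)=02$, $\tau(2)=1$, and note that the letter $2$ occurs in $\VTM$ exactly at the images of $1$'s under $\tau$, while $1$ occurs exactly as the last letter of each block $\tau(0)=012$ and as the single letter $\tau(2)=1$. The key is to track how the positions of a fixed letter transform under one application of $\tau$. Since $\tau$ is not uniform, I would set up the substitution as a letter-to-block map and compute, for each letter $a\in T$, the offsets within $\tau(a)$ at which each letter appears, together with the block lengths $|\tau(0)|=3$, $|\tau(1)|=2$, $|\tau(2)|=1$.

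First I would reduce the claim to a statement about the $2$-automatic (or substitutive) structure of $\VTM$: a position $n$ in $\VTM$ carries the letter $1$ iff a certain condition on the $\tau$-expansion of $n$ holds. Concretely, I would find a periodic pattern — guided by a short computer search — of the form ``$\VTM_n = 1$ for all $n \equiv r \pmod 4$'' restricted to $n$ in some residue class, or better, directly locate an index $i_0$ and verify $\VTM_{i_0 + 4k} = 1$ for all $k \ge 0$ by exhibiting a finite automaton or an eventually periodic coding. The cleanest route is probably to use the same machinery as in the proof for Problem~\ref{prob1}: $\VTM$ is $2$-automatic, so the set $\{n : \VTM_n = 1 \text{ and } n \equiv r \pmod 4\}$ is $2$-automatic for each fixed $r$, and one checks with Walnut (or by hand on the small DFAO of Figure~\ref{vtm_aut}) that for a suitable $r$ this set is all of $r + 4\N$. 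Equivalently, one verifies the predicate $\forall k\, (\VTM[a + 4k] = 1)$ for the right offset $a$.

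Alternatively, and perhaps more in the spirit of a self-contained proof, I would argue combinatorially. Iterate $\tau$ a few times from $0$ to get a long enough prefix, locate a factor of the form $1u1$ where $|u| = 3$ (so the two $1$'s are four apart), and then show this pattern propagates. The natural induction is: if positions $n$ and $n+4$ both carry $1$, does some controlled image under $\tau$ again produce two $1$'s exactly four apart? Because the block lengths $3,2,1$ are not all congruent mod $4$, a single arithmetic step does not obviously map to an arithmetic step, so I would instead iterate $\tau$ twice or three times to get a (nonuniform but structured) map and look for an invariant sub-pattern. The hard part will be controlling the nonuniformity: I expect the main obstacle is checking that the gaps stay equal to $4$ under iteration rather than drifting, which forces one to pin down the exact sequence of letters between consecutive $1$'s and verify that $\tau$ (or $\tau^2$) maps that exact block-sequence to another one with the same total length in the relevant places.

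Given these complications, my recommended approach is the automatic-sequence one: state that $\VTM$ is generated by the DFAO of Figure~\ref{vtm_aut}, and verify with Walnut that $\exists a\; \forall k\; \VTM[a + 4k] = 1$ holds — the output automaton accepting at least one valid offset $a$ completes the proof. This mirrors exactly the technique already used in this paper for the earlier theorem, needs no new machinery, and sidesteps the nonuniformity issue entirely. I would present the explicit Walnut command, display the resulting automaton, and conclude.
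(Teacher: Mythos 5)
Your recommended route --- treat $\VTM$ as the $2$-automatic sequence generated by the DFAO of Figure~\ref{vtm_aut} and have Walnut decide the predicate $\exists a\,\forall k\;\VTM[a+4k]=1$ --- is sound and would succeed (the witness is $a=1$; multiplication by the constant $4$ is first-order expressible, and the paper already uses exactly this machinery for Problem~\ref{prob1}), though as written it is a plan rather than a finished proof: you would still need to run the command and exhibit the output. It is, however, genuinely different from and heavier than the paper's argument, which is two lines long: compute $\tau^3(0)=012021012102$, $\tau^3(1)=01202102$, $\tau^3(2)=0121$, observe that all three image lengths ($12$, $8$, $4$) are divisible by $4$ and that within each image every position congruent to $1 \pmod 4$ carries the letter $1$; since $\tau^3(\VTM)=\VTM$, every block boundary falls at a multiple of $4$, so $\VTM_n=1$ for all $n\equiv 1\pmod 4$. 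Your combinatorial sketch circles exactly this idea --- you correctly propose iterating $\tau$ two or three times and correctly identify the obstacle as drift caused by the unequal block lengths $3,2,1$ --- but you stop just short of the decisive observation that the cube of $\tau$ has all block lengths $\equiv 0 \pmod 4$, which eliminates the drift entirely and makes any induction unnecessary. What the Walnut approach buys is uniformity of method with the earlier theorem and no need to guess the right power of $\tau$; what the paper's approach buys is a self-contained, computer-free proof that exhibits the progression explicitly as the residue class $1$ modulo $4$.
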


\begin{proof}
We rely on the 3rd power of the morphism~$\tau$ defined in Eq.~\eqref{Hall-Thue}:
\begin{align*}
\tau^3(0)&=012021012102\\
\tau^3(1)&=01202102\\
\tau^3(2)&=0121.
\end{align*}
Since $\tau^3(\Thue)=\Thue$,
it is immediate that the letters of $\Thue$ at positions $n \equiv 1 \pmod 4$ are all equal to $1$.
\end{proof}

\subsection{Problem~\ref{prob4} for $6\leq p\leq 29$ and constant $v$}

\begin{theorem}\label{main2}
For all $p\geq 6$ divisible by an integer $m\leq 29$, there exists an
infinite squarefree word $w\in T^\NN$ such that $[w]_p=000\cdots$.
\end{theorem}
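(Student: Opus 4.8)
The plan is to reduce the general statement to the case $p = m$ for a single integer $m$ with $6 \le m \le 29$, and then to handle each such $m$ by exhibiting a suitable squarefree morphism. The reduction is the easy half: if $p = km$ and we have a squarefree word $u$ with $[u]_m = 000\cdots$, then we want to inflate each position of $u$ by a factor of $k$ so that the zeros land in an arithmetic progression modulo $p = km$. Concretely, I would look for a squarefree morphism $\phi$ of uniform length $k$ (over $T$, with $\phi$ verified squarefree via part~2 of Theorem~\ref{Crochemore}) with the property that $\phi$ preserves the ``zero at every $m$-th position'' structure appropriately; more simply, since $[u]_m$ is already constant, one can take any squarefree word $w'$ and interleave, but the cleanest route is: given $u$ squarefree with zeros exactly at positions $\equiv r \pmod m$ forming the subsequence, apply a length-$k$ squarefree morphism and observe the zeros of the image occur at a union of residue classes mod $km$ that still contains a full class mod $km$, provided the morphism sends $0$ to a block whose $0$'s sit in the right place. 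This requires a small amount of care but no real obstacle.

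The substantive part is the base case: for each $m \in \{6,7,\dots,29\}$ (or at least for each such $m$ that is not already a multiple of a smaller one in the list — so really for $m$ prime or otherwise ``new''), produce an infinite squarefree word $w_m \in T^\NN$ with $[w_m]_m = 000\cdots$. The strategy I would use mirrors the paper's earlier technique: find a morphism $g_m : T^* \to T^*$ of uniform length divisible by $m$ such that both $g_m$ and the ``decimation'' morphism $(g_m)_m$ (in the notation $h_p$ introduced before Theorem~\ref{pq_3_11}, so $(g_m)_m(a) = [g_m(a)]_m$) are squarefree morphisms, AND such that $(g_m)_m(a) = 0^{|g_m(a)|/m}$ is the constant word for each $a$ — wait, that last requirement is too strong; instead I require that $[g_m(a)]_m$ is a constant word (all $0$'s) for each letter $a$, which forces the decimated image of any word to be unary. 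Then $w_m := g_m^\omega(0)$ (assuming $g_m(0)$ starts with $0$) is squarefree because $g_m$ is a squarefree morphism applied to a squarefree fixed-point seed, and $[w_m]_m$ is constant because every block $g_m(a)$ contributes only $0$'s to the mod-$m$ subsequence. Verifying $g_m$ is squarefree is a finite check by Theorem~\ref{Crochemore}(2) (uniform case: check squarefree words of length $3$). The hard part will be establishing \emph{existence} of such a $g_m$ for every one of the required moduli — this is exactly where a computer search comes in, and I expect the paper simply exhibits the morphisms (as it does for Theorems~\ref{pq_3_11} and the $(5,6)$ theorem, which list enormous explicit images).

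So in order of execution: (1) state the reduction lemma — it suffices to handle $p = m$ for $m \le 29$, $m \ge 6$; (2) prove the reduction by composing with a fixed short uniform squarefree morphism of length $k = p/m$ that preserves the arithmetic-progression-of-zeros property (checking the composition stays squarefree and the zero positions remain an arithmetic progression mod $p$); (3) for each relevant $m$, exhibit an explicit uniform morphism $g_m$ with $[g_m(a)]_m = 0\cdots0$ for all $a$, and invoke Theorem~\ref{Crochemore}(2) to certify that $g_m$ is squarefree; (4) conclude that $g_m^\omega(0)$ is the desired word. I anticipate the only genuine mathematical content beyond bookkeeping is in step~(2) — making sure that decimating a squarefree-morphism image by $k$ does not accidentally destroy the constant subsequence or the squarefreeness — and in the (computational, not presented-in-full) search underlying step~(3). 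The write-up should therefore lead with the reduction, then tabulate or display the morphisms $g_m$ for the ``primitive'' moduli in the range, remarking that each required squarefreeness assertion has been machine-verified via Theorem~\ref{Crochemore}.
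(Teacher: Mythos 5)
Your base-case construction is exactly the paper's: for each ``new'' modulus $m$ with $6\leq m\leq 29$, exhibit a uniform squarefree morphism $g_m$ (machine-verified via Theorem~\ref{Crochemore}) whose image lengths are multiples of $m$ and which carry the letter $0$ at every position $\equiv 0 \pmod m$ inside each image, then obtain the word by applying $g_m$ to a squarefree word (the paper applies its morphisms to $\Thue$ rather than iterating to a fixed point, but either works, and the existence of the morphisms is indeed settled by explicit computer-found examples). The one place you overcomplicate is the reduction: if $m\mid p$ and $[u]_m=000\cdots$, then the positions that are multiples of $p$ form a subset of the positions that are multiples of $m$, so $[u]_p=000\cdots$ for the \emph{same} word $u$ --- no inflation by a length-$k$ morphism is needed, and the inflation you sketch would require extra conditions on $\phi$ that you never pin down. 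With that one-line observation in place of your step~(2), your plan coincides with the paper's proof (which additionally dispatches the moduli divisible by $4$ via the mod-$4$ progression in $\Thue$ from Theorem~\ref{Hallplus} instead of listing a morphism for them).
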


Before going to the constructions we state some aspects related to the problem.
For constant $v$, Theorem~\ref{main2} will settle the cases for $6\leq p\leq 29$.

\begin{remark}
Currie~\cite{Cur13} showed that there exist uniform squarefree morphisms
$T^* \to T^*$ for all lengths $n\geq 23$.
However, the constructed morphisms $h$ in~\cite{Cur13} are \emph{cyclic shift morphisms}
for which $h(1)= \pi(h(0))$ and $h(2)=\pi(h(1))$ where $\pi$ is the permutation
$(0\, 1\, 2)$ of $T$.
In particular the images of the letters start with different letters.  Because of this they do not
provide solutions to  our main problem; see Theorem~\ref{uniform} below.
\end{remark}

The following result narrows the candidates of infinite squarefree words
with arithmetic progressions obtained by morphisms. It also gives infinitely
many examples of integer sequences $a_1 < a_2 < \cdots$ avoiding infinite
arithmetic progressions with the property that the differences $a_{i+1}-a_i$
form a bounded sequence. Indeed, the bound is always $4$.
(Here $a_i$ will mark the place of the $i$th letter $0$ in the sequence
$h^\omega(0)$.)

\goodbreak
\begin{theorem}\label{uniform}
Let $h\colon T^* \to T^*$ be a uniform squarefree morphism of prime number length $q$ such that
\begin{equation}\label{cond}
\text{$0$ is a prefix  of $h(0)$ but not of $h(1)$ and $h(2)$.}
\end{equation}
Then there does not exist any infinite arithmetic progressions of $0$'s in $w= h^\omega(0)$.
\end{theorem}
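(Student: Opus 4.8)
The plan is to suppose, for contradiction, that $w = h^\omega(0)$ has an infinite arithmetic progression of $0$'s, say at positions $r, r+d, r+2d, \dots$ for some $d \geq 1$. Since $h$ is uniform of length $q$, the word $w$ has a natural block structure: $w = h(w_0)h(w_1)h(w_2)\cdots$, and each length-$q$ block $h(w_i)$ occupies positions $[iq, (i+1)q)$. Inside each block, by condition~\eqref{cond}, the letter $0$ occupies position $iq$ whenever $w_i = 0$, and at least one further interior position of the block may also carry a $0$, but — and this is the key structural observation I would extract — the position $iq$ (the start of the block) carries a $0$ \emph{if and only if} $w_i = 0$. More generally I would analyze which residues modulo $q$ the symbol $0$ can occupy: write $S_a \subseteq \{0,1,\dots,q-1\}$ for the set of positions of $0$ inside $h(a)$, so $0 \in S_0$ and $0 \notin S_1 \cup S_2$.

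Next I would use the arithmetic progression together with $q$ being prime. Reduce the progression modulo $q$: if $\gcd(d,q) = 1$, then $\{r + kd \bmod q : k \geq 0\}$ is all of $\mathbb{Z}/q\mathbb{Z}$, so in particular infinitely many terms of the progression land at positions $\equiv 0 \pmod q$, i.e.\ at block starts; those terms force $w_i = 0$ for infinitely many $i$ lying themselves in an arithmetic progression (the indices $i$ with $iq$ in the original progression form an arithmetic progression, since $q \mid d$ is the complementary case). The other case is $q \mid d$, say $d = q d'$: then the progression stays within a fixed residue class $\rho$ modulo $q$, and the terms at positions $r + kq d'$ correspond, block by block, to the condition that block index $r_0 + k d'$ (where $r = r_0 q + \rho$) has its symbol $a$ satisfying $\rho \in S_a$. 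If $\rho = 0$ this says $w_{r_0 + kd'} = 0$ for all $k$, i.e.\ $w$ has an infinite arithmetic progression of $0$'s; if $\rho \neq 0$ it says $w_{r_0+kd'} \in \{a : \rho \in S_a\}$, a constraint I would need to push further. In every case the upshot is that the existence of an infinite arithmetic progression of $0$'s in $w$ forces the existence of one in $w$ again but with strictly smaller difference (the $\gcd(d,q)=1$ case collapses after one step to a progression of block-indices; the $q \mid d$ case with $\rho = 0$ replaces $d$ by $d/q$), so by infinite descent on $d$ we may assume $d = 1$.

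But a progression with $d = 1$ means $w$ has an arbitrarily long run $00\cdots0$, contradicting squarefreeness (indeed $00$ already is a square). This would close the argument, \emph{provided} the residue-class case $\rho \neq 0$ is handled. I expect that to be the main obstacle: I would need to show that the set $\{a \in T : \rho \in S_a\}$ can never contain two letters whose preimage block admits an arithmetic progression — or, more robustly, iterate the block-reduction once more to push the offending progression into deeper and deeper levels of the morphic structure, again yielding a descent. A cleaner route, which I would try first, is to combine the reductions: starting from any infinite progression of $0$'s with difference $d$, one application of the block argument produces an infinite progression of $0$'s in $w$ with difference $d/\gcd(d,q^\infty)$ times a unit-adjustment, and since $q$ is prime the $q$-adic valuation of $d$ strictly drops unless it is already $0$, in which case $\gcd(d,q)=1$ collapses it to a progression of $0$'s among block-indices — and block-indices satisfy the \emph{same} hypotheses ($w$ is itself $h^\omega(0)$), so this is genuine self-similar descent terminating at $d=1$, the contradiction. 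Verifying that the interior positions $S_1, S_2$ cannot sabotage the ``block start carries $0$ iff $w_i=0$'' dichotomy — equivalently, that $0 \notin S_1 \cup S_2$ is exactly condition~\eqref{cond} — is the one place where the hypothesis is used essentially, and is routine once stated.
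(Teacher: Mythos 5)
Your setup (the offset sets $S_a$, the dichotomy ``a block start carries $0$ if and only if $w_i=0$,'' and the reduction $d\mapsto d/q$ when $q\mid d$ and the progression passes through block starts) is essentially the paper's argument, which is an infinite descent on the modulus using the fixed-point identity $w=h(w)$ together with condition~\eqref{cond}. But your descent does not close. The critical case is $\gcd(d,q)=1$: the block starts lying on the progression $r+kd$ are the positions $iq$ with $k\equiv -rd^{-1}\pmod q$, i.e.\ $i=(r+k_0d)/q+md$, so the induced progression of block indices has the \emph{same} common difference $d$; only the starting point changes. The quantity you are descending on therefore does not decrease, and your claim that this case ``collapses after one step'' to a strictly smaller difference is false — iterating it only shrinks the starting point, never $d$. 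The paper disposes of the coprime case by a direct contradiction instead of a descent: if $q\nmid p$ then, $q$ being prime, the multiples of $p$ form a complete residue system modulo $q$, and one concludes that $w$ would have to be the all-zero word, which is not squarefree. Some such direct termination argument is exactly what your proposal is missing.

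The second unresolved point is the one you flag yourself: when $q\mid d$ but the progression sits at a nonzero offset $\rho$ inside the blocks, you only obtain $w_j\in\{a:\rho\in S_a\}$ along a progression of block indices, which is no longer a progression of $0$'s, and you offer no way to continue. The paper's proof avoids this by taking the progression to pass through block starts (it writes $w=W_1W_2\cdots$ with $|W_i|=p$ and each $W_i$ beginning with $0$), so that condition~\eqref{cond} converts the hypothesis on $w$ directly into the same hypothesis on its $h$-preimage with modulus $p/q$. As written, your proposal contains the correct main reduction but neither of the two cases that actually force the contradiction is carried out, so it does not yet constitute a proof.
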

\begin{proof}
Assume that there exists an integer $p$ such that $w$ has an arithmetic progression of $0$'s modulo~$p$.
Let $p$ be chosen to be the least of these integers.
Then the common length~$q$ of the images $h(a)$, $a \in T$, divides~$p$.
Indeed, otherwise $\gcd(p,q)=1$, as $q$ is a prime number,
and therefore the multiples of~$p$ form a complete residue system modulo~$q$.
In this case~$w$ is the all zero word; a contradiction.

Let $w= W_1W_2 \cdots$ where $|W_i|=p$. In particular, each $W_i$ begins with
the letter~$0$, and, by the above, for each $i$ there exists a unique word
$U_i \in T^*$ of length $p/q$ such that $h(U_i)=W_i$.  By~\eqref{cond}, each $W_i$ begins with $0$.
Now, since  $w$ is a fixed point of~$h$, it is also the case that $w=U_1U_2 \cdots$.
However, $|U_i|=p/q < p$ contradicts the minimality assumption on~$p$.
\end{proof}

\begin{example}
The least length for a uniform squarefree morphism is 11; see Brandenburg~\cite{Brandenburg}.
The following uniform morphism of length 11 is squarefree and satisfies  the conditions of
Theorem~\ref{uniform}:
\begin{align*}
h(0)&=02120102012\,,\\
h(1)&=10201021012\,,\\
h(2)&=10212021012\,.
\end{align*}
\end{example}

\begin{proof}[Proof of Theorem~\ref{main2}]
We proceed by constructing a uniform squarefree morphism~$h$ to
witness the claim for each $p$ with $6\leq p\leq 29$.  For this we need
only to give the morphisms for values of $p$ that are not divisible by
4 or by any $q$ with $6\leq q < p$.  The morphism $h$ is then applied
to any infinite squarefree word (such as the Thue word $\Thue$).  The
squarefreeness of the morphisms listed below can be checked by
Theorem~\ref{Crochemore} aided by a computer.

\bigskip

\noindent\phantom{x}
\begin{minipage}[t]{0.5\textwidth}
{\small\noindent
{\textbf{Case} $p=6$. Image length 36:}\\
$010212 012102 010212 021012 021201 021012$\,\\
$010212 012102 012021 012102 010212 021012$\,\\
$010212 012102 012021 020121 021201 021012$\,

}\end{minipage}\begin{minipage}[t]{0.5\textwidth}
{\small\noindent
{\textbf{Case} $p=7$.  Image length 28:}\\
$0121012 0210121 0201021 0120212$\\
$0121012 0210121 0212021 0120212$\\
$0121012 0210201 0212021 0120102$

}\end{minipage}

\bigskip

\noindent\phantom{x}
\begin{minipage}[t]{0.5\textwidth}
{\small\noindent
{\textbf{Case} $p=9$. Image length 18:}\\
$010210121 020120212$ \,\\
$010210121 020121012$ \,\\
$010210121 021202102$ \,

}\end{minipage}\begin{minipage}[t]{0.5\textwidth}
{\small\noindent
\text{\textbf{Case} $p=10$. Image length 20:}\\
$0102101201 0201202102$ \,\\
$0102101201 0201210212$ \,\\
$0102101201 0212012102$ \,

}
\end{minipage}

\bigskip

\noindent\phantom{x}
\begin{minipage}[t]{0.5\textwidth}
{\small\noindent
{\textbf{Case} $p=11$. Image length 11:}\\
$01021012102$\,\\
$01021202102$\,\\
$01210120212$\,

}\end{minipage}\begin{minipage}[t]{0.5\textwidth}
{\small\noindent

{\textbf{Case} $p=13$. Image length 26:}\\
$0120102012021 0120102120121$ \,\\
$0120102012021 0121021201021$ \,\\
$0120102012021 0201210212021$ \,

}
\end{minipage}

\bigskip

\noindent\phantom{x}
\begin{minipage}[t]{0.5\textwidth}
{\small\noindent
{\textbf{Case} $p=15$. Image length 30:}\\
$010201202101201 021012102010212$\\
$010201202101201 021201210120212$\\
$010201202101201 021202101210212$

}\end{minipage}\begin{minipage}[t]{0.5\textwidth}
{\small\noindent
\text{\textbf{Case} $p=17$. Image length 34:}\\
$01020120210201021 01201021201210212$\\
$01020121012021201 02101202102010212$\\
$01020121012021201 02101202120121012$

}
\end{minipage}

\bigskip

\noindent\phantom{x}
\begin{minipage}[t]{0.5\textwidth}
{\small\noindent
{\textbf{Case} $p=19$. Image length 19:}\\
$0102012021020121012$ \,\\
$0102012021201021012$ \,\\
$0102012102120121012$ \,

}\end{minipage}\begin{minipage}[t]{0.5\textwidth}
{\small\noindent
{\textbf{Case} $p=23$. Image length 23:}\\
$01020120210121020120212$ \,\\
$01020120210201210120212$ \,\\
$01020121012010210120212$ \,

}
\end{minipage}

\bigskip


\noindent\phantom{x}
\begin{minipage}{0.5\textwidth}
{\small\noindent
{\textbf{Case} $p=25$. Image length 25:}\\
$0102012021012010201210212$ \,\\
$0102012021012010210120212$ \,\\
$0102012021012102120121012$ \,

}
\end{minipage}\begin{minipage}{0.5\textwidth}
{\small\noindent
{\textbf{Case} $p=29$. Image length 29:}\\
$01020120210120102012021201021$ \\
$01210201202120102012102010212$ \\
$01210201202120102120210120212$
}
\end{minipage}

%
%
%
%
%
%
%
%
%
%
\end{proof}

We note that there are only cyclic shift morphisms for lengths $13$
and $17$, and there are no uniform morphisms of length $15$.  We also
note that the morphisms given above all have the property that the
images all have a common non-empty prefix.  This raises an interesting
question concerning the existence of $n$-uniform squarefree morphisms
with this property and the maximum length of the common prefix.  Given
a morphism $h:T^* \to T^*$, let $\lcp(h)$ denote the length of the
longest common prefix of the images $\{h(a) : a \in T\}$.  Given a
positive integer $n$, let $\lcp(n)$ denote the maximum value of
$\lcp(h)$ over all $n$-uniform squarefree morphisms $h:T^* \to T^*$.
We have computed the value of $\lcp(n)$ for $18\leq n\leq 70$.  The
results of this computation suggest the following conjecture:

\begin{conjecture}\label{conj:lcp}
For every $n\geq 30$, there exists an $n$-uniform squarefree morphism
$h:T^* \to T^*$ such that $\lcp(h) = n-6$.
\end{conjecture}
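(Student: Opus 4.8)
The plan is to produce, for each $n\ge 30$, an $n$-uniform morphism of the special shape
\[ h_n(a) = P_n\, s_a \qquad (a \in T), \]
in which $P_n$ is a fixed word of length $n-6$ and $s_0,s_1,s_2$ are pairwise distinct words of length $6$. For such a morphism $\lcp(h_n)=n-6$ holds simply by inspection (the three images share the prefix $P_n$ and already differ at the next letter), so — unlike a hypothetical proof of the formula $\lcp(n)=n-6$ — no upper bound argument is needed: the whole task is to make $h_n$ squarefree. Since $h_n$ is uniform, Theorem~\ref{Crochemore} reduces this to checking that $h_n(w)$ is squarefree for each of the twelve squarefree ternary words $w$ of length $3$, i.e.\ essentially that $h_n(aba)$ and $h_n(abc)$ are squarefree for all letters $a,b,c$. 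Moreover the computation reported above already exhibits such morphisms for $30\le n\le 70$, so the genuine problem is a single construction valid for all large $n$. (A shortcut worth trying first is to read off an $h_n$ of this shape from the morphisms used in this paper to settle Problem~\ref{prob4} for $p\ge 30$, whose threshold coincides with the one here.)

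For the uniform construction I would fix once and for all an infinite ternary word $S$ avoiding all powers of exponent exceeding $7/4$ (such words exist, $7/4$ being the repetition threshold for three letters), take $P_n$ to be the length-$(n-6)$ prefix of $S$, and use one fixed triple $(s_0,s_1,s_2)$ of tails for every $n$. The structural fact driving the analysis is that, for any word $w=w_0w_1\cdots$, the image $h_n(w)$ is the period-$n$ word built from the length-$(n-6)$ prefix of $S$, with the final six letters of the $k$-th period overwritten by $s_{w_k}$. A square in $h_n(w)$ meeting no overwritten position is a contiguous block of positions lying inside a single $S$-period, hence a square in $S$ itself, impossible; a square whose period is a multiple of $n$ forces a repetition in $w$ (the overwritten blocks match up under the shift, so the letters $w_k$ themselves have a short period), again impossible since $w$ is squarefree. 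The remaining squares meet at least one overwritten block and have period not a multiple of $n$; here $7/4^+$-power-freeness of $S$ does real work, since a factor of $S$ of period $t$ and length at least $2t-6$ has exponent exceeding $7/4$ once $t>24$, so after discarding the $\le 12$ possibly corrupted letters these squares are pinned to a finite range of periods together with a finite list of coincidence conditions relating short factors of $S$, the value of $n$, and the triple $(s_0,s_1,s_2)$. It then remains to exhibit a concrete $S$ and tails meeting this finite system and the distinctness requirement — for which one has the extra freedom to demand that $S$ also avoid a short explicit set of factors, compatibly with $7/4^+$-power-freeness — and to splice this large-$n$ family with the explicitly verified morphisms for the finitely many remaining $n$.

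The main obstacle is carrying out that last reduction honestly. The dangerous cases are a square whose period is close to $n$ or close to $3n/2$, and a square straddling two distinct overwritten blocks carrying two \emph{different} tails; the resulting constraints couple $s_0,s_1,s_2$ non-linearly and also involve $n$, and $7/4^+$-power-freeness bounds only the \emph{high-exponent} part of the analysis, not every moderate-exponent but long-period near-square, so one will likely need a finer property of $S$ — tight control of where the prefixes of $S$ re-occur, or a rigid morphic description of $S$ — to close the argument uniformly in $n$. Two fallbacks are worth keeping in reserve. If one can take $P_n$ to be a prefix of a fixed automatic sequence and let the tails vary automatically with $n$, the whole family $\{h_n\}$ can be encoded in one automaton over pairs $(n,\text{position})$ and its squarefreeness discharged by a single Walnut verification. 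Alternatively one could try to prove a pumping lemma producing $h_{n+c}$ from $h_n$ by inserting a fixed safe block into $P_n$; I expect this to be harder than it looks, since such an insertion perturbs \emph{every} period of $h_n(w)$ simultaneously, so the usual ``insert a block into a squarefree word'' argument does not transfer and a dedicated proof would be required.
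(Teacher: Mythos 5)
You should know at the outset that the paper does not prove this statement: it is posed as a conjecture, supported only by a machine computation of $\lcp(n)$ for $18\le n\le 70$, an explicit $70$-uniform witness, and a separate computer check showing that $n-6$ cannot be improved to $n-5$; the concluding section explicitly lists proving Conjecture~\ref{conj:lcp} as an open problem. So there is no reference proof to compare yours against, and the only question is whether your proposal settles the conjecture. It does not: it is a programme whose decisive step you yourself leave open, and completing it would be a new result rather than a reconstruction of the paper's argument. (The easy parts are fine: for a morphism of the shape $h_n(a)=P_ns_a$ one only needs existence, not an upper bound on $\lcp$, and Theorem~\ref{Crochemore} does reduce squarefreeness of a uniform morphism to a length-$3$ check. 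Your suggested shortcut of reading $h_n$ off the multi-valued morphism used for $p\ge 30$ does not work, since those images are non-uniform with a common prefix of length only $12$.)

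The concrete gap is the case of a square $xx$ in $h_n(w)$ whose period $t=|x|$ is not a multiple of $n$ and which meets overwritten blocks. Your reduction of this case rests on the claim that after discarding at most $12$ corrupted letters one is left with a period-$t$ factor of $S$ of length roughly $2t-12$, to which the $7/4^{+}$ bound applies. That is not what happens: the clean letters of the square form a union of disjoint segments of $P_n$ (up to $6$ letters are lost from \emph{each} block the square touches, so the count $12$ already fails for squares spanning three or more blocks and grows with $t/n$), these segments are not a contiguous factor of $S$, and the period-$t$ coincidences translate into equalities between factors of $P_n$ taken at offsets differing by $t\bmod n$ --- that is, into self-overlap properties of prefixes of $S$ with period $t\bmod n$ or $n-(t\bmod n)$ over windows of length at most $n-6$, not into a single period-$t$ factor of $S$. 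Whether power-freeness of $S$ excludes these configurations depends on how much of each window the square actually forces to carry the short period, which in turn depends on $n$, on where the square starts relative to the block structure, and on the tails $(s_0,s_1,s_2)$; this is exactly the coupling you flag as unresolved, and no finer return-word property of $S$ is supplied to resolve it. In addition, no explicit $S$ and no explicit tails are exhibited, so even the terminal ``finite system of coincidence conditions'' is never written down or verified, and both fallbacks (a parametrized Walnut verification, or a pumping construction) are only named, not executed. Until one of these routes is carried through, the statement remains what it is in the paper: a conjecture.
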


\begin{example}
  For $n=70$ the morphism $h$ with images
  {\small
  \begin{align*}
  h(0)&=0120210201021012010201202101201021012021020102120210120102012021201021\\
  h(1)&=0120210201021012010201202101201021012021020102120210120102012021020121\\
  h(2)&=0120210201021012010201202101201021012021020102120210120102012021012102
  \end{align*}
  }
  is squarefree and the images have a common prefix of length $64$.
\end{example}

We also note that the $n-6$ in Conjecture~\ref{conj:lcp} would be
optimal.  Using a computer we verified that for every triple of
distinct squarefree words $(u, v, w)$ with $|u|=|v|=|w|=5$, there do
not exist words $p$ and $s$ with $|p|=|s|=8$ such that $pus$, $pvs$,
and $pws$ are all squarefree.  It follows that the $n-6$ in the
conjecture cannot be replaced with $n-5$.

\subsection{Problem~\ref{prob4} for $p\geq 30$ and arbitrary $v$}

The next result implies a positive solution to Problem~\ref{prob4}
for $p\geq 30$ and arbitrary $v$, and also implies a positive
solution to Problem~\ref{prob3}.

\begin{theorem}
Let $(p_i)_{i\geq 0}$ be a sequence of positions such that
$p_{i+1}-p_i\geq 30$ and let $v$ be any infinite ternary word. There
exists an infinite squarefree ternary word $w$ such that $v$ appears
as the subsequence of $w$ indexed by $(p_i)_{i\geq 0}$.
\end{theorem}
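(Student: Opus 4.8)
The plan is to build $w$ greedily, one block at a time, so that the letters at positions $p_0, p_1, \dots$ are exactly $v_0, v_1, \dots$ and the whole word is squarefree. The key enabling tool is Theorem~\ref{main2} (and the surrounding constructions): for every length $n \geq 30$ there is a uniform squarefree morphism $T^* \to T^*$ whose images share a long common prefix; more precisely, Conjecture~\ref{conj:lcp} aside, the explicit morphisms already constructed give, for each gap length $d = p_{i+1} - p_i \geq 30$, a squarefree word of length $d$ that can be inserted between consecutive ``target'' letters with enough freedom at both ends. The real content is that between two prescribed letters $v_i$ and $v_{i+1}$ sitting at distance $d \geq 30$ we can always interpolate a squarefree word of length $d-1$ whose concatenation with what came before (and what comes after) stays squarefree — and that we have at least, say, a handful of choices for this interpolating block, which is what lets the greedy construction never get stuck.

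Concretely, I would proceed as follows. First, set up the induction: suppose we have already fixed a squarefree prefix $W_k$ of $w$ ending exactly at position $p_k$, with $w_{p_j} = v_j$ for all $j \leq k$. We want to extend to a squarefree $W_{k+1}$ ending at position $p_{k+1}$ with $w_{p_{k+1}} = v_{k+1}$; this requires choosing a word $z$ of length $d_k - 1 = p_{k+1} - p_k - 1 \geq 29$ and appending $z\, v_{k+1}$ to $W_k$. Second, prove an interpolation lemma: for any letter $a = v_k$ (the last letter of $W_k$), any letter $b = v_{k+1}$, and any length $\ell \geq 29$, there exist several words $z$ of length $\ell$ such that $a z b$ is squarefree and moreover $z b$ begins and ends ``generically'' — e.g. its length-5 prefix and length-5 suffix are among a fixed small set of squarefree words that are known to be safely prependable. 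The standard way to get such $z$ with a controllable number of choices is to exploit the abundance of squarefree ternary words (Brandenburg-type growth), or, more in the spirit of this paper, to use the uniform squarefree morphisms of Theorem~\ref{main2}: pick a squarefree word of appropriate length, apply a uniform squarefree morphism and truncate, arranging the lengths to hit $\ell$ exactly. Third, the gluing step: to guarantee that appending $z\, v_{k+1}$ to $W_k$ does not create a square straddling the junction, use the fact that any square in an infinite word forces a long repeated factor, so a square crossing the boundary would have to have period at most roughly the length of the newly added block; since we have many choices for $z$ and only boundedly many ``bad'' ones (those completing a square with the bounded-length suffix of $W_k$ or the bounded-length prefix of the future), a counting argument picks a good $z$. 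Iterating over all $k$ produces the infinite word $w$, which is squarefree because every finite factor lies in some $W_k$.

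The main obstacle is the gluing step, i.e.\ controlling squares that straddle the boundary between $W_k$ and the inserted block $z\,v_{k+1}$ — and, symmetrically, squares straddling the boundary between $z\,v_{k+1}$ and the next inserted block. A square $xx$ with one copy of $x$ ending near position $p_k$ and the other extending into $z$ has period $|x|$; if $|x|$ is bounded (say $|x| \leq 29$) then only the last $\leq 58$ letters of $W_k$ and the first $\leq 58$ letters of $z\,v_{k+1}$ matter, so we need the interpolation lemma to deliver a $z$ with a prescribed short prefix avoiding the finitely many forbidden short prefixes — this is where having a guaranteed multiplicity of valid interpolating blocks (rather than just one) is essential, and it is exactly the role played by the morphisms in Theorem~\ref{main2} having images with a long common prefix but differing tails, which gives at least three choices at each step while the obstructions from each side number at most two. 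If $|x|$ is large, then $xx$ is long enough that it must contain a full image-block of the morphism used, and squarefreeness of that morphism (Theorem~\ref{Crochemore}) together with squarefreeness of the underlying word rules it out, by an argument of exactly the flavour of the proof of Theorem~\ref{img_of_vtm}. So the bookkeeping — matching up lengths, tracking which bounded-length prefixes/suffixes are forbidden, and verifying that $30$ is enough slack for the multiplicity to beat the obstructions — is the part that needs care; everything else is routine induction.
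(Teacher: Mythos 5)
Your plan is genuinely different from the paper's, and as written it has a real gap concentrated exactly where you admit the difficulty lies: the gluing step. A greedy interpolation scheme needs a certificate that the \emph{whole} infinite word is squarefree, and your two-case analysis does not supply one. For bounded-period squares you assert that only ``boundedly many'' interpolating blocks are bad and that ``the obstructions from each side number at most two,'' but no bound is proved, and a square straddling the junction between $W_k$ and $z\,v_{k+1}$ need not have bounded period: one copy of $x$ can lie deep inside $W_k$ and the other can run through $z$ and across several later junctions, so the set of forbidden $z$ is not controlled by short prefixes and suffixes alone. For long squares you appeal to an argument ``of the flavour of Theorem~\ref{img_of_vtm},'' but that desubstitution argument requires the entire word to be the image of a single squarefree word under a single (multi-valued) squarefree morphism; a concatenation of ad hoc truncations of images of \emph{different} uniform morphisms has no such structure, so the synchronization step fails. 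The interpolation lemma itself (several safe $z$ of every length $\ell\geq 29$ between any two prescribed letters, with controlled ends) is also only asserted. The paper's own remark that a backtracking search for the $(5,8)$ instance of Problem~\ref{prob2} stalls near length $1200$ is a warning that ``local extendability with multiplicity'' does not by itself yield an infinite squarefree word.

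The missing idea is to make the certificate global rather than local. The paper defines a single \emph{multi-valued} morphism $h$ whose four images of each letter share a common prefix of length $12$ and common suffix of length $9$ and differ only in a middle factor of length $2$, $3$, $4$, or $5$ (so the images have lengths $23$--$26$); it verifies that this multi-valued morphism is squarefree by the (extended) criterion of Theorem~\ref{Crochemore}, applies $h$ to $\VTM$ using only the length-$26$ images, and then \emph{adjusts positions} by swapping a single length-$5$ middle factor occurring in the gap $(p_i,p_{i+1}]$ for a shorter alternative, shifting everything to its right by $1$, $2$, or $3$. Since any $4$ consecutive letters of a ternary squarefree word contain all three letters, the target letter $v_{i+1}$ sits at one of $p_{i+1},p_{i+1}+1,p_{i+1}+2,p_{i+1}+3$ before the swap, and the gap bound $30=26+5-1$ guarantees a swappable middle factor is available. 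Every intermediate word remains an image of $\VTM$ under the same squarefree multi-valued morphism, so squarefreeness never has to be re-argued at the junctions. If you want to pursue your greedy route, you would need to replace the counting argument by an invariant of this kind; as it stands, the proposal does not close.
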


\begin{proof}
We use the squarefree multi-valued morphism $h$ defined by
\begin{align*}
h(0)=&
\begin{cases}
012102120210\mathbf{12}021201210\\
012102120210\mathbf{201}021201210\\
012102120210\mathbf{2012}021201210\\
012102120210\mathbf{20121}021201210,
\end{cases}\\
\end{align*}
$h(1)=\pi(h(0))$, and $h(2)=\pi(h(1))$, where $\pi$ is the permutation
$(0\, 1\, 2)$ of $T$. To show that $h$ is squarefree, we apply
Theorem~\ref{Crochemore}(a).  It is easy to check that the proof of
Theorem~\ref{Crochemore} from \cite{Crochemore} works for multi-valued
morphisms as well; however, when applying the theorem, for each
squarefree word $x$ of length $5$ we have to check that all $4^5$ words
in the image $h(x)$ are squarefree.  Note that the images of each
letter have length $23$, $24$, $25$, or $26$. Also, they share a
common prefix of length $12$ and a common suffix of length $9$, so
that they only differ by their middle factors (in bold).

We start with any infinite ternary squarefree word, say the Thue word
$\Thue$. We apply $h$ to $\Thue$ with images of length $26$ only,
which gives an infinite ternary squarefree word $t = t_0t_1t_2\cdots$.  

Suppose that $t_{p_i} = v_i$; we will show how to modify $t$ so that
$t_{p_{i+1}} = v_{i+1}$.  If $t_{p_{i+1}} \neq v_{i+1}$, then $t$
contains the forced letter $v_{i+1}$ at position $p_{i+1}+1$,
$p_{i+1}+2$, or $p_{i+1}+3$.
Notice that every factor of $t$ of length $26+5-1=30$ contains a full occurrence of a middle factor of length $5$.
Since $p_{i+1}-p_i\geq 30$, there exist a full occurrence of a middle factor of length $5$
between positions $p_{i}+1$ and $p_{i+1}$.
If the match to $v_{i+1}$ is at position $p_{i+1}+1$ (resp.~$p_{i+1}+2$,~$p_{i+1}+3$),
then we can replace this middle factor by the middle factor of length $4$ (resp.~$3$,~$2$),
corresponding to the alternative $h$-image of length $25$ (resp.~$24$,~$23$).
This ensures that in the resulting word, the letter at position $p_{i+1}$ now matches $v_{i+1}$.
This procedure can be repeated for all $i$
and the resulting word is the desired word $w$.
\end{proof}

\section{Future work}
One obvious open problem is to characterize the pairs $(p,q)$ for which Problem~\ref{prob2} has a positive solution.
Another open problem is to prove Conjecture~\ref{conj:lcp}.
Regarding Problem~\ref{prob4}, we have only shown the existence of a positive solution for $p\geq 30$.
It remains to determine what happens for $p\leq 29$.
For example, for $p=4$, one can verify by computer that the infinite word $v=010101\cdots$ cannot be obtained
as a mod~$4$ subsequence of any squarefree ternary word.
Are there similar counterexamples for other small values of $p$, such as $p=6$?

\end{document}